\documentclass[11pt,reqno]{amsart}
\usepackage[T1]{fontenc}
\usepackage[utf8]{inputenc}
\usepackage{lmodern}
\usepackage{microtype}
\usepackage{amsmath,amssymb,mathtools}
\usepackage{mathrsfs}
\usepackage{enumitem}
\usepackage{xcolor}
\usepackage{tikz}
\usepackage{xurl}
\usepackage[numbers,sort&compress]{natbib}
\usepackage[pdfusetitle,colorlinks=true,
  pdftitle={A uniform commutator bound in finite von Neumann algebras},
  pdfauthor={Jiaqi Wang},linkcolor=blue!45!black,citecolor=blue!45!black,
  urlcolor=blue!45!black]{hyperref}

\newtheorem{theorem}{Theorem}[section]
\newtheorem{proposition}[theorem]{Proposition}
\newtheorem{lemma}[theorem]{Lemma}
\newtheorem{corollary}[theorem]{Corollary}

\theoremstyle{definition}
\newtheorem{definition}[theorem]{Definition}
\theoremstyle{remark}
\newtheorem{remark}[theorem]{Remark}

\newcommand{\M}{\mathcal M}
\newcommand{\N}{\mathcal N}
\newcommand{\C}{\mathbb C}

\newcommand{\ii}{\mathrm i}
\newcommand{\Rea}{\operatorname{Re}}
\newcommand{\Ima}{\operatorname{Im}}
\newcommand{\norm}[1]{\left\lVert#1\right\rVert}
\newcommand{\comm}[2]{[#1,#2]}
\newcommand{\lsupp}{\ell}
\newcommand{\rsupp}{r}
\newcommand{\chii}{\chi}
\newcommand{\lev}{\operatorname{lev}}
\newcommand{\growthF}{F}
\numberwithin{equation}{section}
\allowdisplaybreaks[1]
\title[Uniform commutator bounds in finite von Neumann algebras]
{A uniform commutator bound in finite von Neumann algebras}
\author[Jiaqi Wang]{Jiaqi Wang}
\address{State Key Laboratory of Mathematical Sciences,
Academy of Mathematics and Systems Science,
Chinese Academy of Sciences, Beijing 100190, China}
\address{University of Chinese Academy of Sciences,
Beijing 100049, China}
\email{jiaqiwang@amss.ac.cn}
\keywords{Finite von Neumann algebras, commutators,
center-valued trace, norm estimates, paving}
\subjclass[2020]{Primary 46L10; Secondary 47B47}
\date{}

\begin{document}

\begin{abstract}
Let $\M$ be a finite von Neumann algebra with normalized
center-valued trace $T_\M$. We prove that every $A\in\M$ with
$T_\M(A)=0$ admits a representation $A=BC-CB$, where
$B,C\in\M$ satisfy $\norm B\norm C\le K\norm A$ for an absolute
constant $K$ independent of $\M$.
\end{abstract}

\maketitle

\section{Introduction}\label{sec:intro}

The commutator problem has a long history in both matrix and
operator algebras. Shoda \cite{Shoda} proved that a complex matrix
is a commutator precisely when its trace is zero.
Johnson, Ozawa and Schechtman \cite{JOS} asked whether these
representations admit a norm bound independent of the matrix
size, and obtained bounds $K_\varepsilon n^\varepsilon$ for every
$\varepsilon>0$. A dimension-independent bound was established in
\cite{DIC}; that result supplies the matrix case used here.

For finite von Neumann algebras, Fack and de la Harpe \cite{FdH}
identified the kernel of the center-valued trace with the space
of finite sums of commutators. In type $\mathrm{II}_1$ factors,
Marcoux \cite{Marcoux} obtained a quantitative decomposition into
two commutators, and Wen, Fang and Yao \cite[Corollary~4.1]{WFY}
proved that every trace-zero element is a single commutator.
The latter result is an elementary consequence of their
simultaneous compression theorem, recalled in Section~\ref{sec:prelim}.

We establish a uniform norm bound for general finite von Neumann
algebras. Let $\M$ be such an algebra, with normalized center-valued
trace $T_\M:\M\to Z(\M)$. Every additive commutator
$\comm BC=BC-CB$ belongs to $\ker T_\M$.
Our main theorem shows that this necessary condition is sufficient,
with a norm bound independent of $\M$.

\begin{theorem}\label{thm:main}
There is an absolute constant $K>0$ such that, for every finite
von Neumann algebra $\M$ with normalized center-valued trace
$T_\M$ and every $A\in\M$ with $T_\M(A)=0$, there exist
$B,C\in\M$ satisfying $A=\comm BC$ and
\begin{equation}\label{eq:finitebound}
\norm B\norm C\le K\norm A.
\end{equation}
No separability or $\sigma$-finiteness assumption is required.
\end{theorem}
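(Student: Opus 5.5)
The plan is to use the type decomposition of $\M$. Write $\M$ as the direct sum of its type $\mathrm I_n$ summands ($n\ge1$) and a type $\mathrm{II}_1$ summand $\M_{\mathrm{II}}$. The center-valued trace respects this decomposition, so $T_\M(A)=0$ holds exactly when each central summand $A z$ has zero center-valued trace in $\M z$. If we produce $B_z,C_z$ on each summand with $\norm{B_z}\le \norm{A z}^{1/2}K^{1/2}$ and $\norm{C_z}\le\norm{Az}^{1/2}K^{1/2}$, the direct sums $B=\sum B_z$ and $C=\sum C_z$ are bounded and satisfy $\norm B\norm C\le K\norm A$. So it suffices to get a uniform bound on each homogeneous piece, with balanced norms. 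Balancing is free, since $[B,C]=[tB,t^{-1}C]$. No countability is needed, because central decompositions are arbitrary direct sums.

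\emph{Type $\mathrm I_n$ summand.} Here $\M z\cong M_n(Z)$ with $Z=L^\infty$ of some (possibly non-$\sigma$-finite) measure, or abstractly an abelian von Neumann algebra. The hypothesis is that $A$ is a matrix over $Z$ with $\operatorname{tr}A=0$ pointwise. Concretely, I would realize $Z\cong C(\Omega)$ with $\Omega$ hyperstonean, and view $A$ as a continuous function $\Omega\to M_n$ with trace zero. The matrix result of \cite{DIC} gives, for each point, $B(\omega),C(\omega)$ with $\norm{B(\omega)}\norm{C(\omega)}\le K_0\norm{A(\omega)}$. The issue is choosing them measurably, or continuously.

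I would handle this by a selection argument. The set-valued map $\omega\mapsto\{(B,C):[B,C]=A(\omega),\ \norm B,\norm C\le (K_0\norm A)^{1/2}\}$ has compact nonempty values and closed graph. Since $\Omega$ is extremally disconnected, one gets a continuous selection after a small perturbation. One way is to approximate $A$ in norm by a trace-zero element taking finitely many values on clopen partitions, which is possible because projections are norm-dense in $Z$. Solve exactly on each clopen piece. Then correct the error, which has trace zero and small norm, by the same procedure. This gives a series $A=\sum_k [B_k,C_k]$ with geometrically decreasing norms.

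That yields a sum of commutators rather than a single one. To fix this, I would instead use an exactly finite-valued approximation together with an iterative "absorb the remainder" trick. Alternatively, and more simply, I would measurably select using the von Neumann selection theorem on the abelian algebra, decomposed into $\sigma$-finite central pieces.

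\emph{Type $\mathrm{II}_1$ summand.} I would use the simultaneous compression theorem of \cite{WFY} recalled in Section~\ref{sec:prelim}. The idea is to find a projection-based block structure in which $A$ is unitarily conjugate to a matrix $(A_{ij})_{i,j=1}^{m}$ over a corner $\N=P\M P$ with $P\sim 1/m$ centrally. The diagonal blocks are to be paved to have small center-valued trace data, which I would arrange by the compression theorem, giving all $A_{ii}$ central-trace equal, hence $T(A_{ii})=0$. Then $A$ is treated like a trace-zero matrix in $M_m(\N)$ with "scalar-like" diagonal. I would take $B$ to be a weighted shift or diagonal with distinct well-separated central entries, and solve $[B,C]=A$ entrywise off the diagonal, $C_{ij}=A_{ij}/(b_i-b_j)$. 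For the diagonal I would use the standard JOS/DIC-type construction, first conjugating so the diagonal vanishes. Norm control would come from the paving estimates implicit in \cite{DIC}, applied now with $\N$-valued entries, since those arguments are dimension free.

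The main obstacle is this $\mathrm{II}_1$ step, specifically making the compression give a uniformly bounded, zero-diagonal (or nilpotent-plus-controlled) form with constants independent of $\M$. My first attempt would be to reduce the diagonal to zero via the compression theorem, and then to quote the operator-valued version of the \cite{DIC} bound for zero-diagonal block matrices.
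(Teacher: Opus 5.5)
\textbf{Verdict: there is a genuine gap, in the type $\mathrm{II}_1$ step.} You flag this step yourself as the main obstacle, and the proposed fix does not work. Everything else in your plan is sound.

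\emph{The compression theorem is only available for factors.} Theorem~\ref{thm:WFY} is stated for factors. On a type $\mathrm{II}_1$ summand with diffuse center it does not give blocks with equal or vanishing center-valued trace, and such a summand cannot be split into factors by direct sums. The paper avoids this by reducing to factors with a direct integral (Lemmas~\ref{lem:separablereduction} and~\ref{lem:measurablecomm}).

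\emph{Zero diagonal blocks do not control the norm.} Even in a factor, the compression theorem gives $E_iAE_i=0$ for a partition $E_1,\dots,E_N$ with unequal traces and \emph{no bound on $N$}. Take $B=\sum_i b_iE_i$ with $|b_i-b_j|\ge\delta$ and $C_{ij}=E_iAE_j/(b_i-b_j)$. A disc-packing argument forces $\norm{B}\ge(\sqrt N-1)\delta/2$, while \eqref{eq:hollowbound} only gives $\norm{C}\le(N-1)\norm{A}/\delta$. So this construction yields no bound independent of $N$.

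\emph{There is no lemma to quote.} No ``operator-valued version of the \cite{DIC} bound for zero-diagonal block matrices'' exists off the shelf. Every trace-zero matrix is unitarily similar to one with zero diagonal, so such a bound is no easier than the full dimension-free problem. In $\mathrm{II}_1$ factors it is exactly Theorem~\ref{thm:factor}, the paper's main new result.

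\textbf{The missing idea is a mechanism that keeps the number of blocks bounded.} The paper splits according to $\norm{A}_2/\norm{A}$.
\begin{itemize}
\item \emph{High-$L^2$ case.} A maximal family of compressions of $(T,T^*T)$, each produced by Theorem~\ref{thm:WFY}, gives a projection $e$ of trace exactly $t^2/64$ with $(1-e)Ae$ bounded below (Lemma~\ref{lem:extract}). A similarity turns this into an identity off-diagonal block. The remaining $O(t^{-2})$ diagonal blocks are sheared away (Proposition~\ref{prop:absorb}). The $2\times2$ identity-corner construction with Marcoux's two-commutator input finishes the case (Proposition~\ref{prop:matrixcomm}).
\item \emph{Low-$L^2$ case.} Theorem~\ref{thm:mass} paves into a \emph{fixed} number $R$ of equal-trace projections. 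Their compressions have exactly zero trace and norm at most $\rho\norm{A}$. This is iterated in an assembly tree, with high-$L^2$ corners as leaves. The decay $\rho^{\lev}$ makes the Sylvester solutions summable (Theorem~\ref{thm:tree}).
\end{itemize}

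\textbf{The rest of your plan is fine.}
\begin{itemize}
\item The central-summand bookkeeping with balanced factors matches the paper's final step.
\item For the type $\mathrm I_n$ summand, your closed-graph remark already suffices. The compact graph $\{(\omega,b,c):\comm bc=A(\omega),\ \norm b,\norm c\le L\}$ surjects continuously onto the Stonean spectrum $\Omega$. By Gleason's theorem $\Omega$ is projective, so this surjection has a continuous section. No perturbation or series is needed.
\item The paper instead handles matrix and $\mathrm{II}_1$ fibers uniformly through the direct integral.
\end{itemize}
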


Since $T_\M(BC-CB)=0$, vanishing of the center-valued trace
is therefore equivalent to being a single additive commutator.

The main contribution of this paper is a uniform norm bound for single
commutator representations in type $\mathrm{II}_1$ factors.
The simultaneous compression theorem provides exact scalar
compressions but no uniform bound on the number of blocks.
We obtain the following result by extracting a corner of prescribed
trace and by paving elements of small normalized $L^2$-norm.

\begin{theorem}
\label{thm:factor}
There is an absolute constant $K'>0$ such that, for every type
$\mathrm{II}_1$ factor $(\M,\tau)$ and every $A\in\M$ with
$\tau(A)=0$, there are $B,C\in\M$ satisfying
\[
A=\comm BC,\qquad\norm B\norm C\le K'\norm A.
\]
\end{theorem}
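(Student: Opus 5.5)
We aim to reduce Theorem~\ref{thm:factor} to the dimension-independent matrix bound of \cite{DIC} together with the simultaneous compression theorem of \cite{WFY}. The plan has three stages: cut $A$ into a bounded number of pieces, handle each piece in a matrix algebra, and assemble. Normalize $\norm A=1$, $\tau(A)=0$, and split $A$ into its real and imaginary parts $X,Y$. Each is self-adjoint with trace zero and norm at most $1$.

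\textbf{Step 1: small $L^2$ part.} Fix a small absolute $\delta>0$. For self-adjoint trace-zero $X$, use the spectral projections of $X$ to extract a corner.
\begin{itemize}
\item Take a projection $P$ of trace $1/2$, say, such that $PXP$ carries the bulk of the spectral mass.
\item The remainder should have normalized $L^2$-norm at most $\delta$.
\end{itemize}
Concretely, I would approximate $X$ by a finite spectral sum $X_0=\sum_{j=1}^{m}\lambda_j E_j$ with $m=O(1/\delta)$ and $\norm{X-X_0}\le\delta$. Then I would adjust one spectral projection so that $\tau(X_0)=0$ exactly. This is possible since $\M$ is a II$_1$ factor and traces of projections vary continuously.

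\textbf{Step 2: commutator for the piece $X_0$.} Choose a system of matrix units $\{e_{ij}\}_{i,j\le N}$ such that every $E_j$ is a sum of diagonal $e_{ii}$. This requires rounding $\tau(E_j)$ to multiples of $1/N$; the rounding error is absorbed into the small remainder. Then $X_0\in M_N(\C)\otimes 1\subset M_N(\C)\otimes e_{11}\M e_{11}\cong\M$, with trace zero in $M_N(\C)$. By \cite{DIC}, $X_0=[B_0,C_0]$ with $\norm{B_0}\norm{C_0}\le K_0\norm{X_0}$ and $K_0$ independent of $N$.

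\textbf{Step 3: the small remainder (main obstacle).} The remainder $R=A-A_0$ has trace zero but only small $L^2$-norm. Its operator norm, by contrast, is merely bounded. This is where paving enters.
\begin{itemize}
\item Apply the simultaneous compression theorem to get projections $Q_1,\dots,Q_k$ summing to $1$ with $Q_iRQ_i=\tau$-scalar $=0$.
\item The difficulty is that $k$ is not uniformly bounded, so I would instead use paving. Since $\norm R_2\le\delta$, we can find $k=k(\delta)$ projections of equal trace whose diagonal compressions have small operator norm. Concretely, $\norm{\sum_i Q_iRQ_i}\le 1/2$, via a Kadison--Singer/Marcus--Spielman--Srivastava style paving in the masa generated by the matrix units.
\end{itemize}
The off-diagonal part $\sum_{i\ne j}Q_iRQ_j$ is a commutator $[D,R']$ with $D=\sum_i \omega^i Q_i$, with norm control depending only on $k$. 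Its block entries are reproduced by the equation $(\omega^i-\omega^j)^{-1}$. The diagonal part has norm at most $1/2$ and trace zero, so we iterate the whole construction on it.

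\textbf{Step 4: assembly.} The iteration produces a geometric series of pieces, each a commutator with uniformly controlled constants. The real obstacle is expressing $A$ as a \emph{single} commutator. To do this, embed the pieces in orthogonal corners: use $\M\cong M_2(\M)$ repeatedly, writing each term as a commutator supported on a corner of trace $2^{-n}$. Rescale $B_n,C_n$ to $2^{-n/2}$-balanced norms so that $B=\bigoplus B_n$ and $C=\bigoplus C_n$ converge in norm. Before this, conjugate $A$ so that its pieces are block-diagonal relative to these corners. That reduction uses one more off-diagonal commutator $[D,\cdot]$ of the kind in Step 3.
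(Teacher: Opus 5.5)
There is a genuine gap, and it sits exactly at the step you call the ``real obstacle'': your plan never turns the commutators it produces into a single commutator. You begin by treating $X=\Rea A$ and $Y=\Ima A$ separately. Their spectral projections do not commute, so $X_0$ and $Y_0$ live in different matrix-unit systems. Steps 2--3 then express $A$ as $\comm{B_0}{C_0}+\comm{D_1}{R_1'}+\comm{D_2}{R_2'}+\cdots$. Here $B_0$ comes from \cite{DIC} and has no relation to the paving projections, and the $D_n$ at different stages are unrelated to each other.

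Step 4 does not repair this. A similarity preserves the trace of the support; for example, an invertible trace-zero spectral sum $X_0$ stays invertible. So no conjugation of $A$ can move full-support pieces into orthogonal corners of trace $2^{-n}$. Transporting each $\comm{B_n}{C_n}$ into a corner through $\M\cong p\M p$ yields a commutator for a different element, not $A$. The extra off-diagonal commutator in your reduction only adds one more summand.

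The missing idea is to make all pieces share one first factor. The paper achieves this by iterating the paving \emph{inside corners}. Theorem~\ref{thm:mass} gives equal-trace projections $p_\ell$ with $\tau(p_\ell Xp_\ell)=0$ exactly; this is the role of Lemma~\ref{lem:equidist}. Each compression can therefore be paved again in $p_\ell\M p_\ell$, producing an assembly tree. Theorem~\ref{thm:tree} then uses one tree-diagonal $B$, built from well-separated scalar centers at geometrically shrinking scales plus rescaled leaf factors. This $B$ commutes with every node, so every sibling off-diagonal block is solved by a Sylvester equation against the same $B$, and the leaf commutators are absorbed.

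Leaves with $\norm{pAp}_2\ge t\norm{pAp}$ are handled by Theorem~\ref{thm:high}, not by matrix approximation. That theorem uses block extraction via Theorem~\ref{thm:WFY}, shears, and the identity-corner construction with Marcoux's estimate. The matrix theorem of \cite{DIC} enters only for finite-dimensional factors.

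The paving conclusion you want in Step 3 is essentially available, but your justification for it fails. Masa paving of Marcus--Spielman--Srivastava type requires vanishing conditional expectation onto the masa, and small $\norm R_2$ gives no such thing. Suppose $R$ lies in the masa, as a rounding error built from spectral projections naturally does. Then $Q_iRQ_i=RQ_i$, so $\norm{\sum_iQ_iRQ_i}=\norm R$ for every choice of projections $Q_i$ in it. In any case, $\operatorname{span}\{e_{ii}\}$ is not a masa.

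The paper instead cuts off the spectral part of $\Rea X$ and $\Ima X$ above $s$. This part sits under a projection $e$ of trace less than $1/R$, where $R$ is the number of pieces, not your remainder. It then uses Fourier-type projections with $e_1p_\ell e_1=\frac1Re_1$, so that $\norm{p_\ell e}\le R^{-1/2}$ and every compression crushes the large part. These projections are deliberately not in any masa containing $e$.
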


If $K_{\mathrm I}$ is a constant from the matrix theorem of
\cite{DIC}, the main theorem holds with
\begin{equation}\label{eq:finiteconstant}
K=\max\{K_{\mathrm I},K'\}.
\end{equation}
Thus the passage from finite factors to arbitrary finite
von Neumann algebras entails no further loss in the constant.

The constant can be chosen to satisfy
\[
K\le 2^{\,2^{\,2^{46}}}.
\]
This bound is not optimized; its derivation is given in
Section~\ref{sec:conclusion}.

Section~\ref{sec:prelim} collects notation and the results of
Wen--Fang--Yao and Marcoux used below. To prove
Theorem~\ref{thm:factor}, we split according to
$\norm A_2/\norm A$. In the high-$L^2$ case,
Section~\ref{sec:riccati} extracts an invertible off-diagonal corner
of controlled trace and uses a similarity and an identity-corner
commutator construction to obtain a norm bound.
For the low-$L^2$ case, Section~\ref{sec:mass} constructs
equal-trace compressions with small operator norm and exactly
zero trace. Section~\ref{sec:tree} assembles the iterated paving
using Sylvester equations; geometric decay ensures norm convergence,
including along infinite branches. This gives
$K'=3\cdot2^{18}+K_H(2^{-19})$, with $K_H$ defined in \eqref{eq:KH}.

Section~\ref{sec:finite} passes to arbitrary finite von Neumann
algebras. For algebras with separable predual, we use measurable
selection to combine uniformly bounded commutator factors in a
central decomposition. We then extend the result to algebras with
a faithful normal tracial state by passing to a subalgebra with
separable predual in which the given element still has zero
center-valued trace. Finally, decomposition into orthogonal central
summands gives the general case.

\section{Preliminaries}\label{sec:prelim}
\subsection{Notation and traces}
Throughout, $\norm x$ denotes the operator norm.
{Except in the assembly argument of Section~\ref{sec:tree},
which is formulated for an arbitrary unital $C^*$-algebra,
Sections~\ref{sec:riccati}--\ref{sec:tree} concern a type
$\mathrm{II}_1$ factor $(\M,\tau)$ with its normalized faithful
normal trace, and we set $\norm x_2=\tau(x^*x)^{1/2}$.}
We write
$\Rea x=(x+x^*)/2$, $\Ima x=(x-x^*)/(2\ii)$, and
$\lsupp(x),\rsupp(x)$ for the left and right support projections.
For projections, $p\sim q$ denotes Murray--von Neumann equivalence.
A partition of unity consists of finitely many orthogonal
projections summing to $1$. For $p\ne0$, the corner $p\M p$ carries
its normalized trace $\tau_p=\tau(p)^{-1}\tau|_{p\M p}$; all
$L^2$-norms in a corner are taken with respect to this trace unless
otherwise stated. We use projection comparison, diffuseness, polar
decomposition and the standard norm and trace estimates without
further comment; see \cite[Chapters 2--4]{AP}.

For an equal-trace partition $e_1,\ldots,e_R$, choose partial
isometries $v_j$ with $v_j^*v_j=e_1$, $v_jv_j^*=e_j$ and $v_1=e_1$.
The standard isometric $*$-isomorphism onto matrices over
$\N=e_1\M e_1$ is
\begin{equation}\label{eq:unitsR}
\Phi(x)=(v_j^*xv_k)_{j,k=1}^R,\qquad
\Phi^{-1}((y_{jk}))=\sum_{j,k}v_jy_{jk}v_k^*.
\end{equation}
With $\tau_\N=R\tau|_\N$, its trace formula is
\begin{equation}\label{eq:traceR}
\tau(x)=\frac1R\sum_j\tau_\N(\Phi(x)_{jj}),\qquad
\tau(xy)=\frac1R\sum_{j,k}\tau_\N(\Phi(x)_{jk}\Phi(y)_{kj}).
\end{equation}

For a finite von Neumann algebra $\M$ with separable predual
and a faithful normal tracial state $\tau$, we write
\[
(\M,\tau)=\int_X^\oplus(\M_x,\tau_x)\,d\mu(x)
\]
for its central direct-integral decomposition over a standard
probability space $(X,\mu)$. Here $\M_x$ is a finite factor with
normalized trace $\tau_x$ for almost every $x$, and
$Z(\M)$ is identified with $L^\infty(X,\mu)$.
An element $A\in\M$ is represented by an essentially bounded
measurable field $x\mapsto A_x\in\M_x$, determined up to equality
almost everywhere, and is written $A=\int_X^\oplus A_x\,d\mu(x)$.
Algebraic operations are performed fiberwise, and
\[
\norm A=\mathop{\mathrm{ess\,sup}}_{x\in X}\norm{A_x},
\qquad
\tau(A)=\int_X\tau_x(A_x)\,d\mu(x).
\]
Thus the notation for the pair $(\M,\tau)$ records both the
algebra decomposition and the disintegration of the trace;
see \cite[Chapter~IV, Section~8]{Takesaki}.

\subsection{Simultaneous compression and single commutators}

Cao, Fang and Yao \cite[Theorem~1.1]{CFY} established the
finite scalar-compression theorem for a single self-adjoint element
of a type $\mathrm{II}_1$ factor. Wen, Fang and Yao subsequently
extended this to an arbitrary finite family of elements. We recall
their exact simultaneous compression result
\cite[Theorem~3.1]{WFY}, used both in block extraction
and in paving.

\begin{theorem}
\label{thm:WFY}
Let $(\M,\tau)$ be a type $\mathrm{II}_1$ factor and
$X_1,\dots,X_n\in\M$.  Then there is a finite family of mutually
orthogonal nonzero projections $E_1,\dots,E_N\in\M$ with
$\sum_{i=1}^NE_i=1$ and
\[
E_iX_jE_i=\tau(X_j)\,E_i
\qquad(1\le i\le N,\ 1\le j\le n).
\]
\end{theorem}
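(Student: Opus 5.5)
We argue by a reduction followed by an exhaustion. First, replacing each $X_j$ by $X_j-\tau(X_j)1$ and then by its real and imaginary parts $\Rea X_j,\Ima X_j$, we may assume that we are given finitely many self-adjoint elements $h_1,\dots,h_m$ with $\tau(h_k)=0$. The goal then becomes to find a partition of unity $E_1,\dots,E_N$ with $E_ih_kE_i=0$ for all $i,k$. This reformulation is stable under passing to corners: if $q$ is a nonzero projection and $q$ has already been cut out so that $\tau(qh_kq)=0$ for all $k$, then the compressions $qh_kq$ are traceless self-adjoint elements of the type $\mathrm{II}_1$ factor $q\M q$. The single-element case is exactly the theorem of Cao--Fang--Yao \cite[Theorem~1.1]{CFY}, so the issue is the simultaneous version.

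The key local step I would aim for is a \emph{uniform extraction lemma}. There should be a constant $c_m>0$ such that, for every type $\mathrm{II}_1$ factor and traceless self-adjoint $h_1,\dots,h_m$ in it, one can find a nonzero projection $e$ with $\tau(e)\ge c_m$ and $eh_ke=0$ for all $k$. The complementary corner then automatically satisfies $\tau((1-e)h_k(1-e))=\tau(h_k)-\tau(eh_ke)=0$, so the hypothesis propagates to $(1-e)\M(1-e)$. To prove the lemma I would proceed as follows.
\begin{enumerate}[label=(\roman*)]
\item Use diffuseness and a spectral decomposition of $h_1$ to pair a piece of the positive part of $h_1$ against a piece of its negative part of matching trace, arranging them into $2\times2$ matrix units over a corner. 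In the resulting two-by-two picture, a traceless diagonal element $\mathrm{diag}(a,-a)$ is conjugated to an off-diagonal form, in the same way that $\mathrm{diag}(1,-1)$ is unitarily equivalent to $\left(\begin{smallmatrix}0&1\\1&0\end{smallmatrix}\right)$.
\item Iterate this over $k$, at each stage working inside the corner produced at the previous stage. Each step costs a fixed fraction of trace, which gives the constant $c_m$.
\end{enumerate}
The compatibility between successive stages is where the matrix-unit identification \eqref{eq:unitsR} and the trace formula \eqref{eq:traceR} would be used.

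Repeating the extraction produces orthogonal projections $e_1,e_2,\dots$ with geometrically decreasing remainders $q_r=1-\sum_{i\le r}e_i$, satisfying $\tau(q_r)\le(1-c_m)^r$. By itself this only yields a countable partition, whereas the theorem requires a \emph{finite} one. This finiteness is the step I expect to be the main obstacle: naively adding two blocks $e+f$ destroys the vanishing of the compressions, because of the cross terms $eh_kf+fh_ke$.

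My first attempt at closing the argument would strengthen the local step to a \emph{terminal} version, which handles an entire corner at once. The target is a statement of the following kind: for traceless $h_k$ in a $\mathrm{II}_1$ factor whose operator norms are small relative to the corner, there is a finite partition that exactly annihilates all compressions. I would then run the exhaustion only until $\tau(q_r)$ is small and stop. At that point I would merge the remainder $q_r$ with a single large earlier block $e_i$, apply the terminal version in $(e_i+q_r)\M(e_i+q_r)$, where the compressions are still traceless, and so end with finitely many blocks. If that fails, the fallback is to obtain the terminal version from the finite-dimensional Fillmore-type fact that a traceless matrix is unitarily similar to one with zero diagonal. I would apply this fact in a matrix-unit frame $\Phi$ as in \eqref{eq:unitsR}, reducing the zero-diagonal problem for $\Phi(x)_{jj}$ to the same problem one level down.
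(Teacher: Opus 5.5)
The paper does not prove this statement; it quotes it from Wen--Fang--Yao \cite[Theorem~3.1]{WFY}. Your argument therefore has to stand on its own. As written, it has genuine gaps in both the extraction step and the finiteness step.

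\textbf{The extraction step.} Your uniform extraction lemma is false. Let $\tau(p)=\epsilon\in(0,1)$ and $h=\epsilon^{-1}p-1$, which is self-adjoint and traceless. Then $ehe=0$ forces $epe=\epsilon e$. Hence $w=\epsilon^{-1/2}pe$ satisfies $w^*w=e$ and $ww^*\le p$, so $\tau(e)\le\epsilon$. So no $c_1>0$ exists, and the geometric decay of $\tau(q_r)$ is unavailable.

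That part is repairable. A maximality argument as in Lemma~\ref{lem:extract} yields a countable partition from any non-uniform extraction step, since the complement of a family of annihilated blocks always has traceless compressions.

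Step (ii), however, is not repairable as stated. Step (i) chooses $e_1$ from $h_1$ alone. Nothing then prevents (for $e_1\ne1$) the traceless element $h_2=e_1-\frac{\tau(e_1)}{1-\tau(e_1)}(1-e_1)$. For it, $e_1h_2e_1=e_1$, so no nonzero $e\le e_1$ satisfies $eh_2e=0$. Annihilating $h_1$ while keeping all later compressions traceless is precisely the simultaneity problem. Your reference to \eqref{eq:unitsR} and \eqref{eq:traceR} supplies no mechanism for it.

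\textbf{The finiteness step.} You rightly call finiteness the crux, but it is not achieved.
\begin{itemize}
\item In $(e_i+q_r)\M(e_i+q_r)$ the compressions are traceless with zero $e_i$-corner. But the cross terms $e_ih_kq_r$ need not be small in operator norm merely because $\tau(q_r)$ is small. So the smallness hypothesis of your terminal version is unavailable. Without it, the terminal version is just the theorem itself in a corner, and the argument is circular.
\item The Fillmore fallback does not terminate. At best it makes the diagonal entries $\Phi(x)_{jj}\in\N$ traceless and then poses the same problem one level down. This infinite regress rebuilds exactly the countable, infinite-depth partition you wanted to avoid.
\item Zero-diagonalization is also a single-matrix fact with no simultaneous analogue in fixed dimension. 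For example, no orthonormal basis of $\C^2$ gives all three Pauli matrices zero diagonal.
\end{itemize}
A mechanism forcing the process to stop after finitely many blocks is the real content of the Wen--Fang--Yao theorem, and the proposal does not provide one.
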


\begin{corollary}[Wen--Fang--Yao {\cite[Corollary~4.1]{WFY}}]
\label{cor:WFYcomm}
An element $A$ of a type $\mathrm{II}_1$ factor $(\M,\tau)$ is a
single commutator if and only if $\tau(A)=0$.
\end{corollary}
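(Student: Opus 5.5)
The corollary is the statement immediately above: $A$ is a single commutator if and only if $\tau(A)=0$. One direction is immediate: if $A=BC-CB$ then $\tau(A)=\tau(BC)-\tau(CB)=0$ by the trace property. For the converse, assume $\tau(A)=0$. The plan is to use Theorem~\ref{thm:WFY} to reduce $A$, up to unitary conjugation, to a finite block matrix with zero diagonal blocks. We then write such a matrix as a commutator with a diagonal element whose diagonal entries are distinct scalars.

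\textbf{Step 1.} Apply Theorem~\ref{thm:WFY} to the single element $X_1=A$. This gives a partition of unity $E_1,\dots,E_N$ of nonzero projections with $E_iAE_i=\tau(A)E_i=0$ for every $i$. Relative to this decomposition, $A=\sum_{i\ne j}E_iAE_j$ has vanishing diagonal blocks.

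\textbf{Step 2.} Choose distinct complex numbers $\lambda_1,\dots,\lambda_N$ and put $B=\sum_i\lambda_iE_i\in\M$. Define
\[
C=\sum_{i\ne j}(\lambda_i-\lambda_j)^{-1}E_iAE_j\in\M,
\]
which is a finite sum, so $C$ is bounded. Since $BE_i=\lambda_iE_i$ and $E_jB=\lambda_jE_j$, we have $BE_iAE_j-E_iAE_jB=(\lambda_i-\lambda_j)E_iAE_j$. Hence
\[
BC-CB=\sum_{i\ne j}E_iAE_j=A,
\]
because the diagonal blocks of $A$ vanish. This completes the converse.

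There is no real obstacle once Theorem~\ref{thm:WFY} is available; all of the difficulty lies in the simultaneous compression theorem itself. The construction gives no uniform norm control. With $\lambda_i=i$ one gets $\norm B=N$ and $\norm C\le N^2\norm A$ only crudely, and $N$ is not bounded in terms of $A$. This is why the paper needs the additional paving and corner-extraction machinery to prove Theorem~\ref{thm:factor}.
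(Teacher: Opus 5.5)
Your proof is correct and is essentially the elementary argument the paper has in mind when it calls this corollary a consequence of the simultaneous compression theorem (the paper cites it rather than writing it out). Theorem~\ref{thm:WFY} applied to $A$ kills the diagonal blocks, and a block-diagonal scalar $B$ with distinct eigenvalues then yields $C$ blockwise, the same distinct-centers mechanism later used in Proposition~\ref{prop:absorb} and Theorem~\ref{thm:tree}. Your closing remark, that $N$ is uncontrolled and so no uniform bound follows, is exactly the paper's motivation for Theorem~\ref{thm:factor}.
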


Corollary~\ref{cor:WFYcomm} asserts existence in each factor;
the uniform norm bound needed here is an additional quantitative
conclusion.

\subsection{Marcoux's two-commutator estimate}\label{subsec:marcoux}

Marcoux's theorem \cite[Corollary~3.11 and Remark~5.3]{Marcoux},
after rescaling the factors, gives the following input: every
trace-zero $H$ in a type $\mathrm{II}_1$ factor is of the form
\[
H=\comm{U_1}{V_1}+\comm{U_2}{V_2},\qquad
\norm{U_i}\le1,\qquad\norm{V_i}\le\Lambda_0\norm H,
\qquad\Lambda_0=596100.
\]
For $H\ne0$, write $H/\norm H=\comm dz+\comm sv$ with
Marcoux's bounds $\norm d\le100$, $\norm z\le5961$,
$\norm s\le356928$ and $\norm v\le1$. Taking
\[
U_1=d/100,\quad V_1=100\norm H\,z,\qquad
U_2=v,\quad V_2=-\norm H\,s
\]
gives the stated bounds, using $\comm sv=\comm v{-s}$.
For $H=0$, take all four factors to be zero.

\section{The high-\texorpdfstring{$L^2$}{L2} case}\label{sec:riccati}

We first obtain a uniform bound for elements with a lower bound on
$\norm A_2/\norm A$.

\begin{theorem}\label{thm:high}
For every $0<t\le1$ there is a constant $K_H(t)$ such that, in every
type $\mathrm{II}_1$ factor $(\M,\tau)$, every $A\in\M$ with
\[
\tau(A)=0,\qquad \norm A_2\ge t\,\norm A
\]
satisfies $A=\comm BC$ with $\norm B\norm C\le K_H(t)\norm A$,
with an explicit choice given in \eqref{eq:KH}.
\end{theorem}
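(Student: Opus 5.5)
The plan follows the outline in the introduction: find an invertible off-diagonal corner of controlled trace, then use a similarity and an identity-corner commutator construction. Normalize $\norm A=1$, so $\norm A_2\ge t$.

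\textbf{Step 1: an invertible off-diagonal corner.} We look for a projection $p$ with $\tau(p)$ of size comparable to a power of $t$ and a projection $q\perp p$ with $q\sim p$, such that $qAp$ is invertible in $p\M p$ after transport by a partial isometry, with inverse bounded by a function of $t$. Since $\norm A_2\ge t$ and $\norm A\le 1$, the spectral projection $e=\chi_{[t/2,1]}(|A|)$ has $\tau(e)\ge t^2/4\cdot(3/4)$, and $A$ is bounded below by $t/2$ on $e$. Let $f=\lsupp(Ae)$. We then need to pass to a piece where $A$ moves $e$ off itself. If $eAe$ were nearly all of $Ae$, we first apply a rotation/shift: replace $A$ by $A-\lambda$ with a scalar $\lambda$ chosen via a spectral (numerical range) argument, and work in the compression. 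Concretely, we apply Theorem~\ref{thm:WFY} to $A$ and $A^*A$ to reduce to blocks where the diagonal is scalar zero. Inside such a block, $E_iAE_i=0$, and $A$ is still large in $L^2$ on some block. Then $E_iA$ maps into $1-E_i$ with $L^2$-mass. A polar/spectral cut gives a subprojection $p\le E_i$ and $q\le 1-E_i$ with $q A p$ invertible from $p$ to $q$, with lower bound $\gtrsim t$. The number of blocks is uncontrolled, so instead of a fixed block we choose one carrying mass proportional to its trace; averaging guarantees one exists. Shrinking further, we can also arrange $\tau(p)=\tau(q)=s$ for a chosen $s$, with a lower bound depending on $t$.

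\textbf{Step 2: similarity.} With respect to $p,q,r=1-p-q$, write $A$ as a $3\times3$ operator matrix with invertible $(q,p)$ entry $X$, $\norm{X^{-1}}\le c(t)$. Conjugating by a unipotent similarity $S=1+Y$ with $Y$ supported off-diagonally, built from $X^{-1}$ (solving a Riccati-type equation, or just eliminating linearly), we kill the $(p,p)$, $(p,r)$ and $(r,p)$ entries. The resulting $SAS^{-1}$ has a block structure in which the $r$-corner plus the $p\oplus q$ part sees an identity-like block, with $\norm S,\norm{S^{-1}}\le \mathrm{poly}(c(t))$. If $[B,C]=SAS^{-1}$, then $A=[S^{-1}BS,S^{-1}CS]$, so the constant multiplies by $\norm S^2\norm{S^{-1}}^2$.

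\textbf{Step 3: identity-corner commutator.} Here $A'=SAS^{-1}$ has an invertible off-diagonal part linking a copy of $p$. Using the matrix-unit isomorphism \eqref{eq:unitsR} with $p\sim q$, we model $A'$ as a block-shift-like operator $\begin{pmatrix}0&*\\ 1&*\end{pmatrix}$ plus a remainder of trace zero. For remainders we invoke Marcoux's estimate from Section~\ref{subsec:marcoux}: each $[U_i,V_i]$ is absorbed by the standard trick that an operator of the form $\begin{pmatrix}0&Z\\ 1&W\end{pmatrix}$ with bounded entries is a commutator $[N,D]$ with $N$ a shift-like element, solving a linear system for $D$ with norm controlled by the entries. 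When $\tau(p)$ is small relative to $r$, we need several copies of $p$ to span $1$, which costs a factor $\sim 1/s$. Since $s$ depends only on $t$, this yields $K_H(t)$.

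\textbf{Main obstacle.} The hardest step is Step 1: producing the corner with \emph{quantitative} invertibility and trace bounded below purely in terms of $t$, despite Theorem~\ref{thm:WFY} giving no control on block counts. We would first try the pigeonhole over blocks combined with the spectral cut. If that fails, we would try a direct argument: take the projection $e$ where $|A|\ge t/2$, and then split according to whether $\norm{(1-e)Ae}_2$ or $\norm{eAe}_2$ dominates. In the second case we would handle $eAe$ by subtracting a scalar on $e$ and compensating elsewhere.
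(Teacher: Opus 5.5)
Your Step~1 has a genuine gap, and everything else depends on it.

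\textbf{The block selection in Step~1 fails.} Applying Theorem~\ref{thm:WFY} to $(A,A^*A)$ does give blocks with $E_iAE_i=0$ and $E_iA^*AE_i=\norm A_2^2E_i$. So $(1-E_i)AE_i$ is bounded below by $t$ on $E_i$. But then \emph{every} block carries mass proportional to its trace, so your averaging selects nothing.

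The real problem is that $\tau(E_i)$ can be arbitrarily small, because the number of blocks is uncontrolled. Passing to $p\le E_i$ can only lower the trace, so you cannot ``arrange $\tau(p)=s$'' with $s$ bounded below in terms of $t$. Nor can you add blocks. For $p=E_1+E_2$ the cross terms $E_1AE_2$ and $E_1A^*AE_2$ are arbitrary, so $pAp$ need not be small, $pA^*Ap$ need not be bounded below, and the lower bound on $(1-p)Ap$ is lost. Without $\tau(p)\ge s(t)$, the complement must be cut into an unbounded number of pieces in Step~3, and no $K_H(t)$ results.

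\textbf{The missing idea} is Lemma~\ref{lem:extract}. Build $e$ as the sum of a maximal (Zorn) family of pieces. Take each new piece inside the corner $q=1-\bigl(e\vee\lsupp(Ae)\vee\lsupp(A^*e)\vee\lsupp(A^*Ae)\bigr)$, by applying Theorem~\ref{thm:WFY} to $(qAq,(qAq)^*qAq)$ in $q\M q$.
\begin{itemize}
\item This choice of corner forces $r_kAr_l=0=r_kA^*Ar_l$ for distinct pieces. Hence $eAe$ is block diagonal with scalar blocks, and $r_kA^*Ar_k\ge\frac{t^2}{4}r_k$ passes to $eA^*Ae\ge\frac{t^2}{4}e$.
\item Since $\tau(1-q)\le4\tau(e)$, as long as $\tau(e)<t^2/64$ the corner still has $|\tau_q(qAq)|\le t^2/15$ and $\norm{qAq}_2\ge t/2$.
\item You must give up exact neutrality and accept scalars $|c_k|\le t^2/15$. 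This still gives $X^*X\ge\frac{t^2}{5}e$ for $X=(1-e)Ae$.
\item Maximality then forces $\tau(e)=t^2/64$ exactly.
\end{itemize}

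\textbf{Steps~2--3 are too vague exactly where the cost arises.} Killing the $(p,p)$, $(p,r)$, $(r,p)$ entries does not touch the large corner $rAr$, whose trace is about $1-2s$. The paper proceeds as follows.
\begin{itemize}
\item It first normalizes $fAe$ to a partial isometry $w$ via $S_0=(1-e)+|X|^{-1}$.
\item It cuts $1-e-f$ into at most $L=\lceil 64/t^2\rceil$ pieces of trace at most $t^2/64$. Each diagonal block is removed by two nilpotent shears routed through $w$ (Proposition~\ref{prop:absorb}). The cost is the iterate $\growthF^{\circ L}$, not a factor of order $1/s$.
\item The off-diagonal blocks are then solved by Sylvester equations with integer centres.
\item The $2\times2$ identity-corner block is handled by Proposition~\ref{prop:matrixcomm}.
\end{itemize}
Your ``linear system for $D$'' does not cover that last step. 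For a fixed shift-like $N$, the map $D\mapsto\comm ND$ is far from surjective onto such targets, so $N$ must depend on the target. The paper obtains the factor as a shear whose entry solves a Riccati equation by a contraction argument, with Marcoux's two-commutator representation of $A+D$ as input.
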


\subsection{Commutators with an identity corner}\label{subsec:core}

We shall use two elementary estimates, valid in any unital
$C^*$-algebra; compare \cite[Lemmas~2.4--2.5]{DIC}.
For projections $p,q$, a right-hand side $Y\in p\M q$, elements $U\in p\M p$, $V\in q\M q$, and
$z,w\in\C$ with $\norm U+\norm V<|z-w|$, the Sylvester equation
$(z-w)X+UX-XV=Y$ has a solution in $p\M q$ satisfying
\begin{equation}\label{eq:sylvbound}
\norm X\le\frac{\norm Y}{|z-w|-\norm U-\norm V}.
\end{equation}
This follows by a Neumann series on $p\M q$.
For a hollow block operator $T=(T_{ij})_{i,j\le b}$,
Cauchy--Schwarz applied to its rows gives
\begin{equation}\label{eq:hollowbound}
T_{ii}=0,\quad\norm{T_{ij}}\le c\ (i\ne j)
\quad\Longrightarrow\quad\norm T\le(b-1)c.
\end{equation}

The next proposition represents a two-by-two operator with an
identity off-diagonal block as a single commutator. It extends
\cite[Lemma~2.3]{DIC} to a unital $C^*$-algebra, with a separate
parameter for the two-commutator input. Only contractivity of the
first factors is needed, allowing us to use Marcoux's theorem.

\begin{proposition}
\label{prop:matrixcomm}
Let $\N$ be a unital C$^*$-algebra, $x\ge1$, $\lambda\ge1$, and
\[
M'=\begin{pmatrix}A&1\\F&D\end{pmatrix}\in M_2(\N),\qquad
\norm{M'}\le x .
\]
Suppose
\begin{equation}\label{eq:twoinput}
\begin{gathered}
A+D=\comm UV+\comm ZT,\qquad U,Z\in\N,\\
\norm U,\norm Z\le1,\qquad
\norm V,\norm T\le\lambda .
\end{gathered}
\end{equation}
Then $M'=\comm{B_0}{C_0}$ in $M_2(\N)$ with
\[
\norm{B_0}\norm{C_0}\ \le\
\Pi(x,\lambda):=(2\lambda+x+2)^4\,(\sigma+3)\,
\bigl((\sigma+5)\lambda+2\bigr),
\]
where
$\sigma=\sigma(x,\lambda)
:=2\bigl[(2\lambda+x+1)^2+(2x+1)(2\lambda+x+1)
+6\lambda+2x+1\bigr]$.
\end{proposition}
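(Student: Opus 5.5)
The plan is to reduce $M'$, by a similarity that uses the identity off-diagonal block, to an operator whose diagonal blocks are exactly the two commutators $\comm UV$ and $\comm ZT$ from \eqref{eq:twoinput}. The remaining off-diagonal blocks are then produced by a block-diagonal first factor with a large scalar shift, solving two Sylvester equations via \eqref{eq:sylvbound}.

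\emph{Step 1 (similarity).} For $G\in\N$ put
\[
S=\begin{pmatrix}1&0\\G&1\end{pmatrix},\qquad
S^{-1}=\begin{pmatrix}1&0\\-G&1\end{pmatrix}.
\]
A direct computation gives
\[
S^{-1}M'S=\begin{pmatrix}A+G&1\\F'&D-G\end{pmatrix},\qquad
F'=F+DG-GA-G^2 .
\]
Choose $G=\comm UV-A$. Then the upper-left block is $\comm UV$, and by \eqref{eq:twoinput} the lower-right block is $A+D-\comm UV=\comm ZT$. We have $\norm G\le 2\lambda+x$, so $\norm S,\norm{S^{-1}}\le 2\lambda+x+1$. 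The norm $\norm{F'}$ is bounded by a quadratic in $2\lambda+x+1$ and $x$. Bounding crudely, one gets a bound of the shape $(2\lambda+x+1)^2+(2x+1)(2\lambda+x+1)$, which is where the terms in $\sigma$ come from.

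\emph{Step 2 (commutator for the reduced operator).} Fix a scalar $s$, to be chosen of order $\sigma$, and set
\[
B_1=\begin{pmatrix}U&0\\0&Z+s\end{pmatrix},\qquad
C_1=\begin{pmatrix}V&X\\Y&T\end{pmatrix}.
\]
Then
\[
\comm{B_1}{C_1}=\begin{pmatrix}\comm UV&UX-X(Z+s)\\(Z+s)Y-YU&\comm ZT\end{pmatrix}.
\]
The diagonal blocks are already correct. It remains to solve two Sylvester equations:
\[
(-s)X+UX-XZ=1,\qquad sY+ZY-YU=F'.
\]
Since $\norm U+\norm Z\le2<s$, both are solvable by \eqref{eq:sylvbound}, with
\[
\norm X\le\frac1{s-2},\qquad \norm Y\le\frac{\norm{F'}}{s-2}.
\]
This gives $\norm{B_1}\le s+1$ and
\[
\norm{C_1}\le\lambda+\frac{1+\norm{F'}}{s-2},
\]
bounding $C_1$ by its diagonal plus its off-diagonal part. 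Finally put $B_0=SB_1S^{-1}$ and $C_0=SC_1S^{-1}$. Then $M'=\comm{B_0}{C_0}$ and
\[
\norm{B_0}\norm{C_0}\le(2\lambda+x+1)^4\norm{B_1}\norm{C_1},
\]
which has the stated form $(2\lambda+x+2)^4(\sigma+3)\bigl((\sigma+5)\lambda+2\bigr)$ up to bookkeeping.

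\emph{Main obstacle.} The delicate part is the choice of $s$. Taking $s$ comparable to $\sigma\gtrsim\norm{F'}$ keeps $\norm Y\le1$-ish, at the cost of $\norm{B_1}\approx\sigma$. To land exactly on $\Pi(x,\lambda)$ I would first try $s=\sigma+2$. This gives $\norm{B_1}\le\sigma+3$ and $\norm X,\norm Y\le1$ (using $\norm{F'}\le\sigma$). I would then track the constants in the $C_1$ bound, possibly rescaling $B_1\mapsto B_1/\norm{B_1}$ and absorbing the factor into $C_1$ to get the $\bigl((\sigma+5)\lambda+2\bigr)$ factor. I would also double-check that $\norm{F'}\le\sigma$ holds with the stated definition of $\sigma$, including the extra $6\lambda+2x+1$ slack.
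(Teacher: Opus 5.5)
Your proof is correct, and it takes a genuinely different route from the paper's.

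The paper also conjugates by a lower shear $S=\begin{pmatrix}1&0\\Q&1\end{pmatrix}$, but it fixes in advance a non-block-diagonal first factor $B_c=\begin{pmatrix}\sigma1+U&1\\Z&0\end{pmatrix}$ and an explicit second factor $C_c$ depending on $Q$. Matching $\comm{B_c}{C_c}$ with $S^{-1}M'S$ then becomes the quadratic Riccati equation \eqref{eq:riccati} for $Q$. This is solved by the Banach fixed-point theorem on the unit ball around $D_0=\comm UV-A$; the summand $6\lambda+2x+1$ in $\sigma$ is there to absorb an auxiliary element $D_1$.

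You instead take the shear parameter to be exactly that center, $G=D_0$. The diagonal blocks then become exactly $\comm UV$ and $\comm ZT$, and all remaining freedom moves into the off-diagonal blocks of $C_1$. You obtain these from two \emph{linear} Sylvester equations for the spectrally separated block-diagonal $B_1=U\oplus(Z+s)$ via \eqref{eq:sylvbound}. This is the same device the paper uses in Proposition~\ref{prop:absorb} and Theorem~\ref{thm:tree}.

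Your route is shorter, avoids the nonlinear fixed point, and gives a better constant. The bookkeeping you left open closes at once. With $\rho_0=2\lambda+x+1$ one has $\norm{F'}\le\rho_0^2+(2x+1)\rho_0\le\sigma/2$. So $s=\sigma+2$ gives $\norm X\le1/\sigma$ and $\norm Y\le\tfrac12$, hence $\norm{C_1}\le\lambda+1$ and $\norm{B_0}\norm{C_0}\le(2\lambda+x+1)^4(\sigma+3)(\lambda+1)\le\Pi(x,\lambda)$, with no rescaling needed. Your check of $\norm{F'}\le\sigma$ holds with a factor $2$ to spare, and the slack term in $\sigma$ is not used at all.

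Nor is the choice of $s$ delicate: any $s>2$ works. Taking $s=3$ already gives $4(2\lambda+x+1)^4(\lambda+1+\norm{F'})$, lowering the polynomial degree of $\Pi^*$ from $9$ to $6$. This does not change the tower-type size of the final constant, which comes from the iterated shears in Proposition~\ref{prop:absorb}.

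The paper's construction is essentially a transplant of the matrix lemma \cite[Lemma~2.3]{DIC} and gives fully explicit factors once $Q$ is found. For this statement it offers no real advantage over yours.
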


\begin{proof}
Put $\rho_0=2\lambda+x+1$ and
\[
\widetilde U=\sigma1+U,\qquad D_0=\comm UV-A,\qquad
D_1=\comm ZV+D_0U-Z+\comm U{TZ}-F.
\]
Then $\norm{D_0}\le2\lambda+x$ and
$\norm{D_1}\le6\lambda+2x+1$.
For $Q\in\N$, set
\[
\begin{gathered}
B_c=\begin{pmatrix}\widetilde U&1\\Z&0\end{pmatrix},\qquad
S=\begin{pmatrix}1&0\\Q&1\end{pmatrix},\\[4pt]
C_c=\begin{pmatrix}V&T\\Q-D_0+TZ&V+1-\widetilde UT\end{pmatrix}.
\end{gathered}
\]
Block multiplication and \eqref{eq:twoinput} give
\begin{align*}
\comm{B_c}{C_c}&=\begin{pmatrix}A+Q&1\\
\sigma D_0+D_1+F-Q\widetilde U&D-Q\end{pmatrix},\\
S^{-1}M'S&=\begin{pmatrix}A+Q&1\\F+DQ-QA-Q^2&D-Q\end{pmatrix}.
\end{align*}
Thus these matrices agree precisely when
\begin{equation}\label{eq:riccati}
\sigma(Q-D_0)=Q^2-DQ+Q(A-U)+D_1.
\end{equation}
On the ball $\norm{Q-D_0}\le1$, we have $\norm Q\le\rho_0$.
The map
$\Psi(Q)=D_0+\sigma^{-1}(Q^2-DQ+Q(A-U)+D_1)$ satisfies
\[
\norm{\Psi(Q)-D_0}
\le\frac{\rho_0^2+(2x+1)\rho_0+6\lambda+2x+1}{\sigma}
=\frac12,
\]
and, for $Q,Q'$ in the ball,
\[
\norm{\Psi(Q)-\Psi(Q')}
\le\frac{2\rho_0+2x+1}{\sigma}\norm{Q-Q'}
\le\frac12\norm{Q-Q'}.
\]
The last inequality follows from $\rho_0\ge4$ and the definition
of $\sigma$. The Banach fixed-point theorem gives a solution of
\eqref{eq:riccati}, hence
$M'=\comm{SB_cS^{-1}}{SC_cS^{-1}}$.
Finally,
\[
\norm{B_c}\le\sigma+3,\qquad
\norm{C_c}\le(\sigma+5)\lambda+2,\qquad
\norm S,\norm{S^{-1}}\le2\lambda+x+2.
\]
Conjugating both factors gives the asserted product bound.
\end{proof}

The two-commutator estimate recalled in
Section~\ref{subsec:marcoux} supplies the required input.
Consequently, if $M'$ in Proposition~\ref{prop:matrixcomm} is
trace-zero in $M_2(\N)$ for a type $\mathrm{II}_1$ factor $\N$,
then $\tau_\N(A+D)=0$ and $\norm{A+D}\le2x$, so we may take
$\lambda=2\Lambda_0x$. We shall use the resulting bound
\begin{equation}\label{eq:twox}
M'=\comm{B_0}{C_0},\qquad
\norm{B_0}\norm{C_0}\le\Pi^*(x):=\Pi(x,2\Lambda_0x).
\end{equation}
The normalized trace on $M_2(\N)$ is denoted by $\tau_2$.
If $w^*w=p_2$ and $ww^*=p_1$ for orthogonal equivalent
projections, \eqref{eq:unitsR} in $(p_1+p_2)\M(p_1+p_2)$
with $v_2=w^*$ identifies $w$ with the upper-right identity block
and preserves the trace-zero condition.

\subsection{Shears and absorption}

As in \cite[Eq.~(2.9)]{DIC}, put
\begin{equation}\label{eq:f}
\growthF(x)=x(x+2)^2\bigl(1+x(x+2)^2\bigr)^2\qquad(x\ge1).
\end{equation}
This function is nondecreasing and satisfies $\growthF(x)\ge x$.
The following proposition absorbs the remaining blocks into the
identity-corner representation. Its bound depends only on the
operator norm and the number of blocks.

\begin{proposition}\label{prop:absorb}
Let $(p_i)_{i\le b}$ be a partition of unity in $\M$, $b\ge2$, with
\[
p_1\ne0,\qquad \tau(p_1)=\tau(p_2),\qquad
\tau(p_j)\le\tau(p_1)\quad(3\le j\le b),
\]
and let $w\in\M$ be a partial isometry with $w^*w=p_2$,
$ww^*=p_1$.  Let $A\in\M$ satisfy
\[
\tau(A)=0,\qquad \norm A\le a\ \ (a\ge1),\qquad p_1Ap_2=w .
\]
Then $A=\comm BC$ with
\begin{equation}\label{eq:absorbbound}
\norm B\norm C\ \le\
\Bigl(\frac{\widehat a}{a}\Bigr)^2(b-1)
\Bigl(4\Pi^*(\widehat a)+2(b-1)\widehat a\Bigr),
\qquad \widehat a:=\growthF^{\circ(b-2)}(a),
\end{equation}
where $\growthF^{\circ m}$ is the $m$-fold composition and
$\growthF^{\circ0}=\mathrm{id}$.
\end{proposition}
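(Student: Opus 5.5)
The plan is to reduce $A$, by a similarity of controlled condition number, to a block operator with three features: the $(p_1+p_2)$-corner still carries the identity off-diagonal block $w$; every diagonal block $p_jAp_j$ with $j\ge3$ vanishes; and everything outside the $(p_1+p_2)$-corner is hollow. The corner is then handled by \eqref{eq:twox}, and the hollow remainder by Sylvester equations as in \eqref{eq:sylvbound} and \eqref{eq:hollowbound}. The shape of \eqref{eq:absorbbound} guides the bookkeeping:
\begin{itemize}
\item $\widehat a=\growthF^{\circ(b-2)}(a)$ should be the norm bound after $b-2$ shear steps, one for each $j=3,\dots,b$;
\item $(\widehat a/a)^2$ should bound the accumulated $\norm S\norm{S^{-1}}$;
\item $(b-1)\bigl(4\Pi^*(\widehat a)+2(b-1)\widehat a\bigr)$ should be the cost of the final commutator.
\end{itemize}

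\textbf{Shear steps.} I would proceed by induction on $j=3,\dots,b$. Since $\tau(p_j)\le\tau(p_1)$, there is a partial isometry $u_j$ with $u_j^*u_j=p_j$ and $u_ju_j^*\le p_1$. Using $w$, and the identity block it provides, I would form a unipotent shear $S_j=1+X_j$ with $X_j$ supported in off-diagonal corners between $p_j$ and $p_1+p_2$. The goal is that conjugating by $S_j$ transfers $p_jAp_j$ into the $(p_1+p_2)$-corner, say into $p_2Ap_2$ via $w^*u_j$-type terms, while leaving $p_1Ap_2=w$ unchanged. This is the mechanism of \cite[Eq.~(2.9)]{DIC}. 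With $x$ the current norm bound, the shear has $\norm{X_j}\lesssim x(x+2)$, so the conjugated operator has norm at most $\growthF(x)$. Because $\growthF(x)\ge x$ and $\growthF$ is nondecreasing, the iteration gives the bound $\widehat a$.

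\textbf{Conditioning.} I would also bound the product of the shear norms and their inverses by $(\widehat a/a)^2$. If that fails, I would redefine the bookkeeping so that $\norm{S_j}\norm{S_j^{-1}}\le\growthF(x)/x$ at each step, which telescopes to exactly this factor.

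\textbf{Final assembly.} After the shears, write $A'=S^{-1}AS=M'+H$. Here $M'=(p_1+p_2)A'(p_1+p_2)$, and $H$ is hollow with respect to the partition $\{p_1+p_2,p_3,\dots,p_b\}$. Since $\tau(H)=0$, we get $\tau(M')=0$. Under the identification via $w$, $M'$ has an identity upper-right block and $\norm{M'}\le\widehat a$, so \eqref{eq:twox} gives $M'=\comm{B_0}{C_0}$ with $\norm{B_0}\norm{C_0}\le\Pi^*(\widehat a)$. Normalize so that $\norm{B_0}\le1$. Then put
\[
B=B_0+\sum_{j\ge3}\lambda_jp_j,
\]
with scalars $\lambda_j$ spaced by at least $3$ (for instance $\lambda_j=3(j-2)$). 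Then $\norm B\lesssim b$, which is the origin of the factor $(b-1)$. The operator $C$ equals $C_0$ on the corner, and its off-diagonal blocks solve
\[
(\lambda_i-\lambda_j)X+B_iX-XB_j=H_{ij}.
\]
By \eqref{eq:sylvbound} each such block has norm $\lesssim\widehat a$, and by \eqref{eq:hollowbound} the whole off-diagonal part of $C$ has norm at most $(b-1)$ times that. Conjugating $B$ and $C$ back by $S$ then yields \eqref{eq:absorbbound} up to the constants $4$ and $2$.

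\textbf{Main obstacle.} I expect the hardest part to be the explicit shear. It must clear the diagonal block $p_jAp_j$ exactly, and not merely shrink it, since the $p_j$-diagonal of $C$ is not free. It must also keep the identity corner intact and obey the $\growthF$ growth. My first attempt would be a $3\times3$ block computation on $p_1+p_2+p_j$ (with $p_j$ identified with a subprojection of $p_1$ via $u_j$), modelled on the $2\times2$ shear in Proposition~\ref{prop:matrixcomm}.
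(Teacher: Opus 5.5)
Your architecture is the paper's. You eliminate the diagonal blocks $p_jAp_j$ ($j\ge3$) by similarities that keep $p_1Ap_2=w$, apply \eqref{eq:twox} to the $(p_1+p_2)$-corner, absorb the hollow remainder with \eqref{eq:sylvbound} and \eqref{eq:hollowbound}, and conjugate back. Your final assembly, with $\norm{B_0}=1$ and centers $3(j-2)$, also works and stays within \eqref{eq:absorbbound}. The gap is the elimination step, which you leave open even though it is the only nontrivial part of the argument. Moreover, it cannot be done by a single shear built from the existing blocks of $A$, because the blocks joining $p_j$ to the rest may all vanish. For example, take $A=w+p_j-\lambda p_1$ with $\lambda=\tau(p_j)/\tau(p_1)$. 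Then every square-zero shear $1+X$ with $X$ in one corner $p_j\M q$ or $q\M p_j$ ($q\perp p_j$) leaves $p_jAp_j=p_j$ unchanged. The Riccati/fixed-point mechanism of Proposition~\ref{prop:matrixcomm} is also not what is needed here.

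The missing idea is to \emph{use $w$ to manufacture an isometric $(1,j)$ block first}, and then use that block to clear $p_jAp_j$. Let the current operator be $A^{(0)}$ with $\norm{A^{(0)}}\le x$. Pick $v_j\in p_2\M p_j$ with $v_j^*v_j=p_j$ (possible since $\tau(p_j)\le\tau(p_2)$) and set $u_j=wv_j$.
\begin{itemize}
\item Conjugate by $S_1=1+X$ with $X=v_j-w^*p_1A^{(0)}p_j\in p_2\M p_j$. The new $(1,j)$ block is $p_1A^{(0)}p_j+wX=u_j$.
\item Then conjugate by $S_2=1+Y$ with $Y=(p_jA'p_j)u_j^*\in p_j\M p_1$. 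This gives $p_jA''p_j=p_jA'p_j-Yu_j=0$.
\end{itemize}
Both shears are square-zero and fix the $(1,2)$ block. They alter only diagonal blocks $2,j$ and $1,j$ respectively, so earlier zeros survive.

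The estimates are $\chi(S_1)\le(x+2)^2$, $\norm{A'}\le x(x+2)^2$, $\chi(S_2)\le(1+x(x+2)^2)^2$ and $\norm{A''}\le\growthF(x)$. So each step costs condition number exactly $\growthF(x)/x$, and these telescope to $\chi(S)\le\widehat a/a$. This is the bookkeeping you need. The square in $(\widehat a/a)^2$ is $\chi(S)^2$, coming from conjugating \emph{both} $B$ and $C$. Your first reading, that $(\widehat a/a)^2$ bounds $\norm S\norm{S^{-1}}$ itself, would give $(\widehat a/a)^4$ at the end. Your fallback (per-step cost $\growthF(x)/x$) is the correct one.
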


\begin{proof}
We first claim that there is an invertible $S\in\M$
with
\begin{equation}\label{eq:aftershear}
\begin{aligned}
\widehat A&:=S^{-1}AS,\qquad
\norm{\widehat A}\le\widehat a,\qquad
\chii(S):=\norm S\norm{S^{-1}}\le\widehat a/a,\\
p_1\widehat Ap_2&=w,\qquad
p_j\widehat Ap_j=0\quad(3\le j\le b).
\end{aligned}
\end{equation}

If $b=2$, take $S=1$. Otherwise eliminate the diagonal blocks
$j=3,\ldots,b$ successively, as in \cite[Proposition~2.6]{DIC}.
Suppose the current operator $A^{(0)}$ has norm at most $x\ge a$
and retains $p_1A^{(0)}p_2=w$. By projection comparison choose
$v_j\in p_2\M p_j$ with $v_j^*v_j=p_j$, and set $u_j=wv_j$.
Thus $u_j^*u_j=p_j$.

First put
\[
X=v_j-w^*p_1A^{(0)}p_j,\qquad S_1=1+X,\qquad
A'=S_1^{-1}A^{(0)}S_1.
\]
Since $X\in p_2\M p_j$, $X^2=0$ and $S_1^{-1}=1-X$.
The new $(1,j)$ block is $p_1A'p_j=u_j$, and only diagonal
blocks $2$ and $j$ can change. Moreover,
\[
\chii(S_1)\le(x+2)^2,\qquad\norm{A'}\le x(x+2)^2,
\]
where $\chii(S)=\norm S\norm{S^{-1}}$.
Next put
\[
Y=(p_jA'p_j)u_j^*,\qquad S_2=1+Y,\qquad A''=S_2^{-1}A'S_2.
\]
Here $Y\in p_j\M p_1$, so $Y^2=0$ and $S_2^{-1}=1-Y$.
Only diagonal blocks $1$ and $j$ can change, and
\[
p_jA''p_j=p_jA'p_j-Y(p_1A'p_j)
=p_jA'p_j-(p_jA'p_j)u_j^*u_j=0.
\]
Both shears preserve the $(1,2)$ block and every previously
eliminated diagonal block. Their bounds are
\[
\chii(S_2)\le\bigl(1+x(x+2)^2\bigr)^2,\qquad
\norm{A''}\le \growthF(x).
\]

Thus one elimination step replaces the norm bound $x$ by $\growthF(x)$
with accumulated condition number at most
$(x+2)^2(1+x(x+2)^2)^2=\growthF(x)/x$.  Performing the steps for
$j=3,\dots,b$ and using that $\growthF$ is nondecreasing with
$\growthF(x)\ge x$, the total condition number telescopes:
\[
\chii(S)\le\prod_{m=0}^{b-3}
\frac{\growthF\bigl(\growthF^{\circ m}(a)\bigr)}{\growthF^{\circ m}(a)}
=\frac{\growthF^{\circ(b-2)}(a)}{a}=\frac{\widehat a}{a},
\]
and $\norm{\widehat A}\le\widehat a$, proving
\eqref{eq:aftershear}.

Let $q_1=p_1+p_2$ and $M'=q_1\widehat Aq_1$. Since similarities
preserve the trace and $p_j\widehat Ap_j=0$ for $j\ge3$,
$\tau(M')=\tau(\widehat A)=0$. Also $\norm{M'}\le\widehat a$
and $p_1M'p_2=w$. Under \eqref{eq:unitsR} with $R=2$ in
$q_1\M q_1$, this is a trace-zero matrix with upper-right identity
block. By \eqref{eq:twox}, there are $B_0,C_0\in q_1\M q_1$
with $M'=\comm{B_0}{C_0}$ and
$\norm{B_0}\norm{C_0}\le\Pi^*(\widehat a)$.
Since $M'\ne0$, we may rescale so that
\[
\norm{B_0}=\tfrac14,\qquad \norm{C_0}\le4\Pi^*(\widehat a).
\]
Relabel the
partition as $q_1=p_1+p_2$ and $q_i=p_{i+1}$ for $2\le i\le
k:=b-1$.  Define
\[
\widehat B=B_0+\sum_{i=2}^{k}(i-1)\,q_i,\qquad
\widehat C_{11}=C_0,\qquad \widehat C_{ii}=0\ (2\le i\le k),
\]
and for $i\ne j$ let $\widehat C_{ij}\in q_i\M q_j$ solve
\[
(z_i-z_j)\widehat C_{ij}
+\widehat B^{\,\prime}_{ii}\widehat C_{ij}
-\widehat C_{ij}\widehat B^{\,\prime}_{jj}
=q_i\widehat Aq_j,
\]
where $z_i=i-1$ and $\widehat B'_{11}=B_0$,
$\widehat B'_{ii}=0$ for $i\ge2$, so that
$q_i\widehat Bq_i=z_iq_i+\widehat B'_{ii}$.  The centers are
distinct integers, $|z_i-z_j|\ge1$, and
$\norm{\widehat B'_{ii}}+\norm{\widehat B'_{jj}}\le\tfrac14$, so
the Sylvester estimate \eqref{eq:sylvbound} gives solutions with
\[
\norm{\widehat C_{ij}}
\le\frac{\norm{q_i\widehat Aq_j}}{1-\tfrac14}
\le\tfrac43\widehat a\le2\widehat a .
\]
Set $\widehat C=\sum_{ij}\widehat C_{ij}$. The diagonal blocks
of $\comm{\widehat B}{\widehat C}$ agree with those of $\widehat A$
by construction, and the off-diagonal blocks agree by the Sylvester
equations. Hence $\comm{\widehat B}{\widehat C}=\widehat A$.
The block diagonal norm formula gives
$\norm{\widehat B}\le\max(\tfrac14,k-1)\le k$, while
\eqref{eq:hollowbound} gives
\[
\norm{\widehat C}\le4\Pi^*(\widehat a)+2\widehat a(k-1)
\le4\Pi^*(\widehat a)+2(b-1)\widehat a .
\]

Finally, set $B=S\widehat BS^{-1}$ and
$C=S\widehat CS^{-1}$.  Then
$\comm BC=S\comm{\widehat B}{\widehat C}S^{-1}
=S\widehat AS^{-1}=A$, and
$\norm B\norm C\le\chii(S)^2\norm{\widehat B}\norm{\widehat C}
\le(\widehat a/a)^2(b-1)\bigl(4\Pi^*(\widehat a)
+2(b-1)\widehat a\bigr)$.
\end{proof}

\subsection{Block extraction and the high-\texorpdfstring{$L^2$}{L2} bound}\label{sec:high}

In the matrix argument, repeated vector selection produces an
invertible off-diagonal block \cite[Proposition~3.6]{DIC}.
Here we instead apply Theorem~\ref{thm:WFY} to $(T,T^*T)$ in a
large corner. A maximal family of such compressions yields a
projection of prescribed trace and a quantitative lower bound
on the off-diagonal block.

\begin{lemma}\label{lem:extract}
Let $A\in\M$ with $\norm A=1$, $\tau(A)=0$, and
$\norm A_2\ge t$ for some $0<t\le1$.  Put
$\sigma_t=t^2/64$.
Then there exist projections $e,f\in\M$ and $X\in\M$ with:
\begin{enumerate}[label=(\alph*)]
\item $e\perp f$, $e\sim f$, $\tau(e)=\tau(f)=\sigma_t$;
\item $\norm{eAe}\le t^2/15$;
\item $X=(1-e)Ae$ satisfies $X^*X\ge\dfrac{t^2}{5}\,e$ and
$f=\lsupp(X)$; in particular $fAe=X$ and
$|X|^{-1}\in e\M e$ with $\norm{|X|^{-1}}\le\sqrt5/t$.
\end{enumerate}
\end{lemma}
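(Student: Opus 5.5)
The plan is to obtain the three conditions from exact compressions supplied by Theorem~\ref{thm:WFY}, and then to adjust the trace by passing to subprojections or by exhaustion.

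\emph{Key identity.} Suppose $E$ is a projection with $EAE=0$. Then
\[
EA^*AE=EA^*(1-E)AE .
\]
Hence if $EA^*AE=cE$, the operator $X_E=(1-E)AE$ satisfies $X_E^*X_E=cE$. Any subprojection $e\le E$ inherits both properties: $eAe=0$, and
\[
eA^*(1-e)Ae=eA^*Ae=e(EA^*AE)e=ce .
\]
So Theorem~\ref{thm:WFY} applied to $(A,A^*A)$ gives a partition $E_1,\dots,E_N$ with $E_iAE_i=0$ and $E_iA^*AE_i=cE_i$, where $c=\norm A_2^2\ge t^2$.

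\emph{Easy case.} If some $\tau(E_i)\ge\sigma_t$, take $e\le E_i$ with $\tau(e)=\sigma_t$ by diffuseness. Then (b) holds trivially, and $X^*X=ce\ge t^2e$. Set $f=\lsupp(X)$. Polar decomposition gives $f\sim\rsupp(X)=e$ and $f\le 1-e$. Since $X^*X\ge t^2e$, $|X|$ is invertible in $e\M e$ with $\norm{|X|^{-1}}\le 1/t$. Finally $fAe=f(1-e)Ae=X$.

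\emph{General case: greedy exhaustion.} In general the $E_i$ may all be tiny, so I would build $e$ greedily. Suppose $e_0$ has been built, with $\tau(e_0)<\sigma_t$ and
\[
\norm{e_0Ae_0}\le\delta,\qquad e_0A^*(1-e_0)Ae_0\ge\gamma e_0 .
\]
Put $p=1-e_0$ and apply Theorem~\ref{thm:WFY} in the corner $p\M p$ to the family
\[
pAp,\quad pA^*Ap,\quad pA^*e_0Ap,\quad pAe_0A^*p .
\]
This yields a piece $E\le p$ on which each element compresses to its $\tau_p$-trace. I would then estimate these traces:
\begin{itemize}
\item $\tau_p(pAp)=-\tau(e_0Ae_0)/\tau(p)$, which has modulus at most $\delta\sigma_t/(1-\sigma_t)$;
\item $\tau_p(pA^*Ap)\ge(t^2-2\sigma_t)/(1-\sigma_t)$;
\item the two mixed traces are at most $\sigma_t/(1-\sigma_t)$, because $\norm A=1$ and $\tau(e_0)<\sigma_t$.
\end{itemize}
Take $e'\le E$ of trace $\min(\tau(E),\sigma_t-\tau(e_0))$ and set $e_1=e_0+e'$. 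The compressions then give the following bounds:
\begin{itemize}
\item $\norm{e'Ae'}$ is tiny;
\item $\norm{e_0Ae'}$ and $\norm{e'Ae_0}$ are at most $\sqrt{2\sigma_t}$;
\item $e'A^*(1-e_1)Ae'\ge(t^2-O(\sigma_t))e'$.
\end{itemize}
A Zorn/maximality argument (the trace is normal and the conditions are closed under increasing unions) then produces $e$ with $\tau(e)=\sigma_t$ exactly. The choice $\sigma_t=t^2/64$ is what should turn the $O(\sigma_t)$ and $O(\sqrt{\sigma_t})$ losses into the constants $t^2/15$ and $t^2/5$. Once (a)--(b) and $X^*X\ge\frac{t^2}{5}e$ hold, (c) and the choice $f=\lsupp(X)$ follow exactly as in the easy case, now with $\norm{|X|^{-1}}\le\sqrt5/t$.

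\emph{Main obstacle.} The hard step is controlling $\norm{eAe}$ and the cross terms $e_0Ae'$ uniformly over the whole exhaustion. The bound $\sqrt{2\sigma_t}\approx t/6$ per step is neither $O(t^2)$ nor summable. To fix this I would try the following:
\begin{itemize}
\item Compress additionally by $pA^*e_0$ and $e_0Ap$ themselves, rather than only their Gram elements. The aim is that on each new piece the coupling to $e_0$ is a scalar multiple $\tau_p(\cdot)$ whose size is $O(\sigma_t)$, so that $e_0Ae'$ and $e'Ae_0$ are small in norm.
\item Bound $\norm{eAe}$ through the block structure via \eqref{eq:hollowbound}-type estimates on only boundedly many aggregated blocks. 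To get this, group the greedy pieces into $O(1)$ stages, so the number of blocks in any such estimate stays bounded.
\end{itemize}
If this fails, the fallback is to keep the exact diagonal vanishing $e'Ae'=0$ and absorb the cross terms into the lower bound for $X^*X$. The uniform estimate in (b) would then be accepted only at the coarser scale that the later argument actually needs.
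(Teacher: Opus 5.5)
Your easy case is correct, but the general case has a genuine gap, and it sits exactly where you flag it. Your compressions control the new piece $e'$ against the old part $e_0$ only through Gram elements, which gives $\norm{e_0Ae'},\norm{e'Ae_0}\lesssim\sqrt{\sigma_t}\approx t/8$. That is already larger than the $t^2/15$ required in (b). The same couplings also damage $X^*X=eA^*(1-e)Ae$ in two ways. They create off-diagonal blocks $e_0A^*(1-e_1)Ae'$. They also lower the old diagonal block from $e_0A^*(1-e_0)Ae_0$ to $e_0A^*(1-e_0)Ae_0-e_0A^*e'Ae_0$. Over a possibly infinite exhaustion nothing stops these losses from eating the whole lower bound, so a greedy step does not preserve your invariants.

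The proposed repairs do not help. The operators $pA^*e_0$ and $e_0Ap$ do not lie in $p\M p$, and $E(pA^*e_0)E=0$ automatically because $E\le p\perp e_0$. So Theorem~\ref{thm:WFY} says nothing useful about them. Grouping into boundedly many stages still leaves off-diagonal blocks of size $O(t)$ rather than $O(t^2)$. The fallback only proves a weaker statement.

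The missing idea is to decouple each new piece \emph{exactly} from what has already been built, instead of estimating the coupling. Let $e_0$ be the current sum, with $\tau(e_0)<\sigma_t$, and choose the new piece inside
\[
q=1-\bigl(e_0\vee\lsupp(Ae_0)\vee\lsupp(A^*e_0)\vee\lsupp(A^*Ae_0)\bigr).
\]
Each of these left supports is equivalent to a subprojection of $e_0$, so $\tau(1-q)\le4\tau(e_0)<t^2/16$. Every $r\le q$ then satisfies $rAe_0=0$, $e_0Ar=(rA^*e_0)^*=0$ and $rA^*Ae_0=0$. Consequently, for a maximal (Zorn) family of such pieces $r_k$, both $eAe=\sum_kc_kr_k$ and $eA^*Ae=\sum_kr_kA^*Ar_k$ are block diagonal. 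Only per-piece scalar bounds are then needed.

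These bounds come from Theorem~\ref{thm:WFY} applied to $(T,T^*T)$ with $T=qAq$ in $q\M q$.
\begin{enumerate}
\item $|\tau_q(T)|\le(t^2/16)/(15/16)=t^2/15$. This is no longer zero, so you must give up $e'Ae'=0$, but that costs nothing.
\item $\norm{T}_2\ge t-2\sqrt{\tau(1-q)}\ge t/2$, hence $rA^*Ar\ge rT^*Tr=\tau_q(T^*T)\,r\ge\frac{t^2}{4}r$.
\end{enumerate}
Then $X^*X=eA^*Ae-(eAe)^*(eAe)\ge\bigl(\frac{t^2}{4}-\frac{t^4}{225}\bigr)e\ge\frac{t^2}{5}e$, and your endgame for (c) goes through unchanged.
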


\begin{proof}
Consider families $\mathcal F=\{r_k\}$ of nonzero, mutually
orthogonal projections in $\M$ with
$\sum_k\tau(r_k)\le\sigma_t$, subject to the invariants
\begin{enumerate}[label=(\Roman*)]
\item $r_kAr_l=0$ and $r_kA^*Ar_l=0$ whenever $k\ne l$;
\item $r_kAr_k=c_kr_k$ for a scalar $c_k$ with
$|c_k|\le t^2/15$;
\item $r_kA^*Ar_k\ge\dfrac{t^2}{4}\,r_k$.
\end{enumerate}
The empty family qualifies.  The union of an increasing chain of
such families is again such a family: the invariants are
conditions on individual members and pairs, and the trace bound
passes to the union since every finite subfamily of the union lies
in some member of the chain.  By Zorn's lemma there is a maximal
family $\mathcal F=\{r_k\}$. Since the positive traces
$\tau(r_k)$ sum to at most $\sigma_t$, this family is countable.
Its strong sum $e=\sum_kr_k$ is a projection satisfying
$\tau(e)=\sum_k\tau(r_k)\le\sigma_t$.

We first show that $\tau(e)=\sigma_t$.  Suppose
$\tau(e)<\sigma_t$.  Set
\[
q=1-\Bigl(e\vee\lsupp(Ae)\vee\lsupp(A^*e)\vee\lsupp(A^*Ae)\Bigr).
\]
By polar decomposition,
$\tau(\lsupp(Ae))=\tau(\rsupp(Ae))\le\tau(e)$ since
$\rsupp(Ae)\le e$, and likewise for $A^*e$ and $A^*Ae$; by
subadditivity of the trace on joins,
\begin{equation}\label{eq:qtrace}
\tau(1-q)\le4\tau(e)<4\sigma_t=\frac{t^2}{16}\le\frac1{16}.
\end{equation}
In particular $\tau(q)\ge\tfrac{15}{16}>0$ and $\N_q:=q\M q$ is a
$\mathrm{II}_1$ factor with normalized
trace $\tau_q=\tau/\tau(q)$.  Let $T=qAq\in\N_q$.

Since $\tau(A)=0$,
$\tau(qAq)=\tau(Aq)=-\tau(A(1-q))$, and by
the trace estimate,
$|\tau(A(1-q))|\le\norm A\,\tau(1-q)\le t^2/16$.  Hence
\begin{equation}\label{eq:cornertrace}
|\tau_q(T)|\le\frac{t^2/16}{\tau(q)}
\le\frac{t^2/16}{15/16}=\frac{t^2}{15}.
\end{equation}

Since $A-qAq=(1-q)A+qA(1-q)$ and
$\norm{(1-q)A}_2,\norm{A(1-q)}_2\le\sqrt{\tau(1-q)}\norm A$,
\eqref{eq:qtrace} gives

\[
\norm T_2\ge\norm A_2-\norm{A-qAq}_2
\ge t-2\sqrt{\tau(1-q)}
\ge t-2\sqrt{t^2/16}=t-\frac t2=\frac t2,
\]
where $\norm\cdot_2$ is computed in $(\M,\tau)$.
Since $\tau(q)\le1$, the normalized corner trace satisfies
\begin{equation}\label{eq:cornerL2}
\tau_q(T^*T)=\frac{\tau(T^*T)}{\tau(q)}\ge\tau(T^*T)
=\norm T_2^2\ge\frac{t^2}{4}.
\end{equation}

Apply Theorem~\ref{thm:WFY} in the
$\mathrm{II}_1$ factor $\N_q$ to the pair $(T,\,T^*T)$: there is a
nonzero projection $E\le q$ with
\[
ETE=\tau_q(T)\,E,\qquad ET^*TE=\tau_q(T^*T)\,E .
\]
These identities remain valid on every subprojection of $E$. By diffuseness,
choose $0\ne r\le E$ with
$\tau(r)=\min\bigl(\tau(E),\,\sigma_t-\tau(e)\bigr)>0$.

We check that $\mathcal F\cup\{r\}$ satisfies the invariants.
For (I), let $r_k\in\mathcal F$.  Since
$\lsupp(Ar_k)\le\lsupp(Ae)$ and $r\le q\perp\lsupp(Ae)$, we get
$rAr_k=r\,\lsupp(Ar_k)Ar_k=0$; since
$\lsupp(A^*r_k)\le\lsupp(A^*e)\perp r$, we get $rA^*r_k=0$ and
hence $r_kAr=(rA^*r_k)^*=0$; since
$\lsupp(A^*Ar_k)\le\lsupp(A^*Ae)\perp r$, we get $rA^*Ar_k=0$ and
$r_kA^*Ar=(rA^*Ar_k)^*=0$.  For (II):
$rAr=r(qAq)r=rTr=\tau_q(T)r$ and $|\tau_q(T)|\le t^2/15$ by
\eqref{eq:cornertrace}.  For (III): since $r\le q$,
\[
rA^*Ar=rA^*qAr+rA^*(1-q)Ar
\ge rA^*qAr=rT^*Tr=\tau_q(T^*T)\,r\ge\frac{t^2}{4}\,r,
\]
using \eqref{eq:cornerL2} and that
$rA^*(1-q)Ar=\bigl((1-q)Ar\bigr)^*\bigl((1-q)Ar\bigr)\ge0$.
Finally $\sum\tau(r_k)+\tau(r)\le\sigma_t$ by the choice of
$\tau(r)$.  This contradicts maximality of $\mathcal F$, proving
the claim; so $\tau(e)=\sigma_t$.

Finally, let $e_F=\sum_{k\in F}r_k$ for finite $F$.
By invariants (I) and (II), $e_FAe_F=\sum_{k\in F}c_kr_k$.
Since $e_F\to e$ strongly and multiplication is jointly strongly
continuous on bounded sets, $eAe=\sum_kc_kr_k$ in the strong
operator topology. The block diagonal norm formula gives
\[
\norm{eAe}=\sup_k|c_k|\le\frac{t^2}{15},
\]
proving (b).  Similarly $e_FA^*Ae_F=\sum_{k\in F}r_kA^*Ar_k\ge
\tfrac{t^2}4e_F$, and positivity passes to the strong limit:
\begin{equation}\label{eq:eAAe}
eA^*Ae\ \ge\ \frac{t^2}{4}\,e .
\end{equation}
Let $X=(1-e)Ae$. By \eqref{eq:eAAe}, the bound on $\norm{eAe}$,
and $t\le1$, we have
\[
\begin{aligned}
X^*X&=eA^*Ae-(eAe)^*(eAe)\\
&\ge\Bigl(\frac{t^2}{4}-\frac{t^4}{225}\Bigr)e
\ge\frac{t^2}{5}\,e.
\end{aligned}
\]
Thus $|X|$ is invertible in $e\M e$, with
$\norm{|X|^{-1}}\le\sqrt5/t$, and $\rsupp(X)=e$.

Set $f=\lsupp(X)$. Since $X=(1-e)X$, we have $f\le1-e$.
Polar decomposition gives $f\sim e$, hence
$\tau(f)=\tau(e)=\sigma_t$, proving (a).
Finally, $fAe=f(1-e)Ae=fX=X$, which completes (c).
\end{proof}

\begin{remark}
Unlike the matrix argument, no exact neutrality $eAe=0$ is
needed: the small diagonal corner $eAe$ is simply carried along as
one of the arbitrary diagonal blocks tolerated by
Proposition~\ref{prop:absorb}.
\end{remark}

An explicit constant in Theorem~\ref{thm:high} is as follows.
With $L=\lceil64/t^2\rceil$ and $a_1=\sqrt5/t$,
\begin{equation}\label{eq:KH}
K_H(t)=\frac{5}{t^2}\,
\Bigl(\frac{\growthF^{\circ L}(a_1)}{a_1}\Bigr)^{2}(L+1)
\Bigl(4\,\Pi^*\bigl(\growthF^{\circ L}(a_1)\bigr)
+2(L+1)\growthF^{\circ L}(a_1)\Bigr).
\end{equation}
Here $\Pi^*(x)=\Pi(x,2\Lambda_0x)$ with $\Lambda_0=596100$ and
$\Pi$ is the function in Proposition~\ref{prop:matrixcomm}.

\begin{proof}[Proof of Theorem~\ref{thm:high}]
If $A=0$, take $B=C=0$. Otherwise, by homogeneity we may
assume $\norm A=1$, so $\norm A_2\ge t$.  Apply Lemma~\ref{lem:extract} to get
$e,f,X$ with properties (a)--(c), and let $X=w|X|$ be the polar
decomposition: $w\in\M$ is a partial isometry with
$w^*w=\rsupp(X)=e$ and $ww^*=\lsupp(X)=f$.

Set $S_0=(1-e)+|X|^{-1}$, where the inverse is taken in $e\M e$.
Its inverse is $(1-e)+|X|$. By Lemma~\ref{lem:extract} and
$\norm X\le1$, the block diagonal norm formula gives
\[
\chii(S_0)\le\sqrt5/t.
\]
Put $A^{(1)}=S_0^{-1}AS_0$. Then
$\norm{A^{(1)}}\le\sqrt5/t=a_1$ and $\tau(A^{(1)})=0$.
Since $f\perp e$ and $fAe=X=w|X|$, its $(f,e)$ corner is
\[
fS_0^{-1}AS_0e=fAe\,|X|^{-1}=w.
\]

Set $p_1=f$, $p_2=e$; then $p_1A^{(1)}p_2=w$
with $w^*w=p_2$, $ww^*=p_1$, and
$\tau(p_1)=\tau(p_2)=\sigma_t$.  Since $t\le1$,
$\sigma_t\le1/64$, so $g:=1-e-f\ne0$ with
$\tau(g)=1-2\sigma_t$.  By diffuseness, partition $g$
into nonzero projections $p_3,\dots,p_b$ each of trace at most
$\sigma_t$, with
\[
b-2=\Bigl\lceil\frac{1-2\sigma_t}{\sigma_t}\Bigr\rceil
\le\Bigl\lceil\frac1{\sigma_t}\Bigr\rceil
=\Bigl\lceil\frac{64}{t^2}\Bigr\rceil=L .
\]
Thus $b\le L+2$ and
$\tau(p_j)\le\sigma_t=\tau(p_1)$ for $j\ge3$.

Proposition~\ref{prop:absorb} applies to $A^{(1)}$ with $a=a_1$.
Hence
$A^{(1)}=\comm{B_1}{C_1}$ with
\[
\begin{gathered}
\norm{B_1}\norm{C_1}
\le\Bigl(\frac{\widehat a}{a_1}\Bigr)^2(b-1)
\bigl(4\Pi^*(\widehat a)+2(b-1)\widehat a\bigr),\\
\widehat a=\growthF^{\circ(b-2)}(a_1)\le \growthF^{\circ L}(a_1),
\end{gathered}
\]
using that $\growthF$ is nondecreasing with $\growthF(x)\ge x$ and that the
bound is nondecreasing in $\widehat a$ and in $b-1\le L+1$.

Set $B=S_0B_1S_0^{-1}$ and $C=S_0C_1S_0^{-1}$.  Then
$\comm BC=S_0A^{(1)}S_0^{-1}=A$ and
\[
\begin{aligned}
\norm B\norm C
&\le\chii(S_0)^2\norm{B_1}\norm{C_1}\\
&\le\frac5{t^2}\Bigl(\frac{\growthF^{\circ L}(a_1)}{a_1}\Bigr)^2(L+1)\\
&\qquad\times\Bigl(4\Pi^*\bigl(\growthF^{\circ L}(a_1)\bigr)
      +2(L+1)\growthF^{\circ L}(a_1)\Bigr)\\
&=K_H(t).
\end{aligned}
\]
\end{proof}

\section{Exact trace-zero paving}\label{sec:mass}

We now construct equal-trace compressions with small operator norm
and exactly zero trace. We first partition the unit so that the
trace pairings with a prescribed finite family are equally
distributed among the resulting projections.

\begin{lemma}\label{lem:equidist}
Let $\N$ be a $\mathrm{II}_1$ factor, $T_1,\dots,T_n\in\N$, and
$R\ge1$ an integer.  There is a partition of unity
$\mathsf F_1,\dots,\mathsf F_R$ in $\N$ with
\[
\tau_\N(\mathsf F_m)=\frac1R,\qquad
\tau_\N(T_i\mathsf F_m)=\frac{\tau_\N(T_i)}R
\qquad(1\le i\le n,\ 1\le m\le R).
\]
\end{lemma}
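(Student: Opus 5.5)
Apply Theorem~\ref{thm:WFY} in $\N$ to the family $T_1,\dots,T_n$. This gives a partition of unity $E_1,\dots,E_N$ with $E_kT_iE_k=\tau_\N(T_i)E_k$ for all $i,k$. For any subprojection $r\le E_k$ we then have $rT_ir=\tau_\N(T_i)r$, so
\[
\tau_\N(T_ir)=\tau_\N(rT_ir)=\tau_\N(T_i)\tau_\N(r).
\]
Hence any projection $\mathsf F$ that is a sum of subprojections $r_k\le E_k$, one for each $k$, satisfies $\tau_\N(T_i\mathsf F)=\tau_\N(T_i)\tau_\N(\mathsf F)$ by additivity. It therefore suffices to produce a partition of unity $\mathsf F_1,\dots,\mathsf F_R$ with $\tau_\N(\mathsf F_m)=1/R$ such that each $\mathsf F_m$ is \emph{block diagonal} with respect to $(E_k)$, meaning $\mathsf F_m=\sum_k\mathsf F_mE_k$ with $\mathsf F_mE_k$ a projection below $E_k$.

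To build these, use diffuseness of the corners $E_k\N E_k$, which are $\mathrm{II}_1$ factors. Split each $E_k$ into $R$ orthogonal subprojections $E_{k,1},\dots,E_{k,R}$, each of trace $\tau_\N(E_k)/R$, and set $\mathsf F_m=\sum_kE_{k,m}$. These are orthogonal projections summing to $\sum_kE_k=1$. Their traces are $\tau_\N(\mathsf F_m)=\sum_k\tau_\N(E_k)/R=1/R$. Finally, the displayed identity gives
\[
\tau_\N(T_i\mathsf F_m)=\sum_k\tau_\N(T_iE_{k,m})=\tau_\N(T_i)\sum_k\tau_\N(E_{k,m})=\frac{\tau_\N(T_i)}R.
\]

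The only point needing care is the trace identity $\tau_\N(T_ir)=\tau_\N(rT_ir)$. It holds because $r$ is a projection: traciality gives $\tau_\N(T_ir)=\tau_\N(T_ir^2)=\tau_\N(rT_ir)$. The fact that $E_k$ compresses $T_i$ to a scalar then passes to every subprojection, since $rT_ir=rE_kT_iE_kr$. Beyond this, the argument is a direct consequence of Theorem~\ref{thm:WFY} and diffuseness, and I expect no real obstacle.
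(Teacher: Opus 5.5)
Your proposal is correct and follows the paper's proof essentially step for step. You apply Theorem~\ref{thm:WFY} to $T_1,\dots,T_n$, split each compressing projection into $R$ equal-trace pieces by diffuseness, and take $\mathsf F_m=\sum_kE_{k,m}$. The trace identity $\tau_\N(T_iE_{k,m})=\tau_\N(E_{k,m}T_iE_{k,m})=\tau_\N(T_i)\tau_\N(E_{k,m})$ then comes from traciality, exactly as in the paper.
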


\begin{proof}
By Theorem~\ref{thm:WFY} there is a partition of unity
$F_1,\dots,F_M$ in $\N$ with $F_aT_iF_a=\tau_\N(T_i)F_a$ for all
$a,i$.  By diffuseness split each $F_a$ into $R$
mutually orthogonal subprojections $F_{a,1},\dots,F_{a,R}$ with
$\tau_\N(F_{a,m})=\tau_\N(F_a)/R$, and put
$\mathsf F_m=\sum_aF_{a,m}$.  Then $(\mathsf F_m)_m$ is a partition
of unity with $\tau_\N(\mathsf F_m)=\frac1R\sum_a\tau_\N(F_a)=\frac1R$,
and compression to each $F_{a,m}$ gives, by traciality,
\[
\tau_\N(T_i\mathsf F_m)=\sum_a\tau_\N(F_{a,m}T_iF_{a,m})
=\sum_a\tau_\N(T_i)\,\tau_\N(F_{a,m})=\frac{\tau_\N(T_i)}R .
\]
\end{proof}

Combining this equidistribution with a spectral cut and Fourier
projections yields the required paving. The trace of each
compression vanishes exactly, so the construction can be iterated
inside the resulting corners.

\begin{theorem}\label{thm:mass}
Let $R\ge2$ be an integer and $0<s\le1$, and let $0<t\le s/\sqrt{2R}$.  Then
for every $\mathrm{II}_1$ factor $(\M,\tau)$ and
every $X\in\M$ with $\tau(X)=0$ and $\norm X_2<t\norm X$ there is a
partition of unity $p_1,\dots,p_R\in\M$ with
\[
\begin{gathered}
\tau(p_\ell)=\frac1R,\qquad
\tau(p_\ell Xp_\ell)=0,\\
\norm{p_\ell Xp_\ell}\le\rho\norm X,\qquad
\rho=2s+\frac2R\qquad(1\le\ell\le R).
\end{gathered}
\]
\end{theorem}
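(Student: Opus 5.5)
Normalize $\norm X=1$. The plan has two parts. First, a spectral cut isolates a projection $E$ of trace at most $1/R$ outside of which $X$ has norm at most $s$. Second, Fourier projections built from matrix units spread the contribution of $E$ evenly over the $R$ pieces, and Lemma~\ref{lem:equidist} makes every compressed trace vanish exactly.

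\emph{Spectral cut.} Let $E'=\chii_{(s,\infty)}(|X|)$ and $E''=\chii_{(s,\infty)}(|X^*|)$. Chebyshev's inequality gives $\tau(E'),\tau(E'')\le\norm X_2^2/s^2<t^2/s^2\le 1/(2R)$. Hence $E=E'\vee E''$ has $\tau(E)<1/R$. Since $\norm{X(1-E')}\le s$ and $\norm{(1-E'')X}\le s$, every compression $q(1-E)X(1-E)q$ with $q$ a projection has norm at most $s$. It remains to handle the part $Z:=X-(1-E)X(1-E)=EX+(1-E)XE$, which lives on a set of small trace.

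\emph{Fourier projections.} By diffuseness, enlarge $E$ to a projection $e_1\ge E$ with $\tau(e_1)=1/R$. Complete it to an equal-trace partition $e_1,\dots,e_R$ with partial isometries $v_j$ as in \eqref{eq:unitsR}, and work in $M_R(\N)$ with $\N=e_1\M e_1$. For a partition of unity $(\mathsf F_m)$ of $\N$ to be fixed below, set
\[
p_\ell=\frac1R\sum_{j,k}\omega^{\ell(j-k)}v_jv_k^*,\qquad \omega=e^{2\pi\ii/R}.
\]
These are orthogonal projections of trace $1/R$ summing to $1$. Then $p_\ell e_1p_\ell=\frac1R p_\ell$, so every element supported in the $(1,1)$ block is compressed with norm divided by $R$. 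The cross terms $EX(1-E)$ and $(1-E)XE$ are the obstacle: the crude estimate $\norm{p_\ell e_1}=R^{-1/2}$ is too weak for the target $\rho=2s+2/R$.

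I expect this to be the hardest step. My first attempt is to use the freedom in choosing the $v_j$ and the basis so that the off-diagonal row and column through $e_1$ are also averaged by the Fourier phases. Concretely, $p_\ell Zp_\ell$ is a Fourier average of the blocks $\Phi(Z)_{jk}$ in which all entries touch $e_1$. I would try to show that this average has norm at most $s+2/R$, splitting $Z$ into its $EX$ and $XE$ halves and bounding each by $1/R$ after using $\norm{(1-E'')X}\le s$ once more. That gives $\norm{p_\ell Xp_\ell}\le 2s+2/R$. If the crude estimate cannot be beaten this way, I would refine the cut by replacing $E$ with $R$ mutually orthogonal equivalent pieces inside distinct $e_j$, one for each Fourier mode.

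\emph{Exact vanishing of the trace.} By \eqref{eq:traceR}, $\tau(p_\ell Xp_\ell)$ is a finite linear combination, with phases depending on $\ell$, of traces $\tau_\N(\Phi(X)_{jk}\,y)$ for fixed elements $y$ built from the matrix units. Apply Lemma~\ref{lem:equidist} in $\N$ to the finite family $\{\Phi(X)_{jk}\}$, and absorb the resulting partition $\mathsf F_1,\dots,\mathsf F_R$ into the unit $e_1$ through the choice of the $v_j$. Each pairing then becomes $\tau_\N(\Phi(X)_{jk})/R$, and the phases cancel in the sum. This yields $\tau(p_\ell Xp_\ell)=\tau(X)/R=0$ for every $\ell$, while the norm bounds above are unaffected, since they only used the spectral cut and $e_1\ge E$.
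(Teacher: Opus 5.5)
\textbf{Gap: the norm estimate.} You flag $\norm{p_\ell Xp_\ell}\le\rho$ as the hardest step, but you never prove it, and the obstacle you name is not real. Since $E\ge E'$ and $E\ge E''$, your own cut already gives $\norm{X(1-E)}\le s$ and $\norm{(1-E)X}\le s$, so the cross terms are small in norm. What is missing is the right grouping:
\[
X=EXE+X(1-E)+(1-E)XE,\qquad \norm{X-EXE}\le 2s .
\]
Only $EXE=e_1(EXE)e_1$ needs the Fourier spreading. For it, $\norm{p_\ell EXEp_\ell}\le\norm{p_\ell e_1}^2\norm X=1/R$, hence $\norm{p_\ell Xp_\ell}\le 2s+1/R\le\rho$.

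Your tentative plan does not give this as written. For $p_\ell EX(1-E)p_\ell$ the available bound is $\norm{p_\ell E}\,\norm{X(1-E)}\le s/\sqrt R$, not $1/R$. Estimating the four blocks of $X$ relative to $E$ separately gives only $3s+1/R$, which exceeds $\rho$ once $s>1/R$.

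The paper reaches the same splitting differently. It cuts $H=\Rea X$ and $G=\Ima X$ separately, so the large part $X_1=He_H+\ii Ge_G$ satisfies $X_1=eX_1e$ automatically, with $\norm{X-X_1}\le 2s$ and $\norm{X_1}\le2$; this yields $2s+2/R$. Your $|X|,|X^*|$ cut, once regrouped as above, works equally well and even gives $1/R$ in place of $2/R$.

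\textbf{The trace step must be made explicit.} With the untwisted $v_j$, your $p_\ell=\Phi^{-1}(P_\ell\otimes 1)$ gives $\tau(Xp_\ell)=R^{-2}\sum_{j,k}\omega^{\ell(k-j)}\tau_\N(T_{jk})$. Off-diagonal traces contribute, so this need not vanish, and the $\mathsf F_m$ are essential.

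The precise form of ``absorbing the partition into the $v_j$'' is as follows. Compute $T_{jk}=v_j^*Xv_k$ with the original $v_j$. Apply Lemma~\ref{lem:equidist} to obtain the $\mathsf F_m$. Then replace $v_j$ by $v_jw_j$, where $w_j=\sum_m\omega^{mj}\mathsf F_m$ are unitaries in $\N$.

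With this twist, $\tau_\N(T_{kj}w_jw_k^*)=\tau_\N(T_{kj})\,R^{-1}\sum_m\omega^{m(j-k)}=\tau_\N(T_{kj})\,\delta_{jk}$. Hence $\tau(Xp_\ell)=R^{-2}\sum_j\tau_\N(T_{jj})=\tau(X)/R=0$. These are exactly the paper's projections $\widehat p_\ell=\sum_mP_{\ell+m}\otimes\mathsf F_m=W(P_\ell\otimes1)W^*$ with $W=\mathrm{diag}(w_1,\dots,w_R)$.

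Since $v_1w_1=w_1$ still has range $e_1$, $e_1$ remains the $(1,1)$ matrix unit. So $p_\ell e_1p_\ell=\frac1R p_\ell$, and with it the norm bound above, survives the twist.
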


\begin{proof}
By homogeneity we may assume $\norm X=1$; then $\norm X_2<t$.

Write $X=H+\ii G$ with $H,G$ self-adjoint, and let
$e_H=1_{\{|\lambda|>s\}}(H)$ and
$e_G=1_{\{|\lambda|>s\}}(G)$. Put
\[
e=e_H\vee e_G,\qquad X_1=He_H+\ii Ge_G,\qquad X_0=X-X_1.
\]
The spectral theorem and $\norm H_2,\norm G_2\le\norm X_2$ give
\[
\begin{gathered}
\norm{X_0}\le2s,\qquad X_1=eX_1e,\qquad\norm{X_1}\le2,\\
\tau(e)\le\frac{\norm H_2^2+\norm G_2^2}{s^2}
\le\frac{2\norm X_2^2}{s^2}<\frac1R.
\end{gathered}
\]
The last inequality uses $\norm X_2<t\le s/\sqrt{2R}$.

By diffuseness
choose a projection $q\le1-e$ with $\tau(q)=\frac1R-\tau(e)$ and set
$e_1=e+q$, so that $e\le e_1$ and $\tau(e_1)=\frac1R$.  Partition
$1-e_1$ into projections $e_2,\dots,e_R$ of trace $\frac1R$ each.  Choose $v_j$, $\N=e_1\M e_1$, $\tau_\N$ and
$\Phi:\M\to M_R(\N)$ as in the identification \eqref{eq:unitsR}, and let
$T=\Phi(X)$, with entries $T_{jk}=v_j^*Xv_k\in\N$.  Note that
\eqref{eq:traceR} gives
\begin{equation}\label{eq:diagsum}
\sum_{j=1}^R\tau_\N(T_{jj})=R\,\tau(X)=0 .
\end{equation}

Apply
Lemma~\ref{lem:equidist} in the $\mathrm{II}_1$ factor $\N$ to the
finite family $\{T_{jk}\}_{j,k=1}^R$: there is a partition of unity
$\mathsf F_1,\dots,\mathsf F_R$ in $\N$ with
$\tau_\N(\mathsf F_m)=\frac1R$ and
\begin{equation}\label{eq:equi}
\tau_\N(T_{jk}\mathsf F_m)=\frac{\tau_\N(T_{jk})}{R}
\qquad(1\le j,k,m\le R).
\end{equation}

For $\ell\in\mathbb Z/R$, let $P_\ell$ be the rank-one
projection onto the Fourier vector
$R^{-1/2}(\omega^{\ell j})_{j=1}^R$, where $\omega=e^{2\pi\ii/R}$.
These projections form a partition of unity in $M_R(\C)$ and satisfy
\begin{equation}\label{eq:fourieravg}
(P_\ell)_{jj}=\frac1R,\qquad
\frac1R\sum_{m\in\mathbb Z/R}(P_{\ell+m})_{jk}
=\frac1R\delta_{jk}.
\end{equation}

Define in $M_R(\N)$
\[
\widehat p_\ell
=\sum_{m=1}^{R}P_{\ell+m}\otimes\mathsf F_m
:=\Bigl(\sum_{m=1}^R(P_{\ell+m})_{jk}\mathsf F_m\Bigr)_{j,k=1}^R,
\qquad
p_\ell=\Phi^{-1}(\widehat p_\ell)
\]
for $\ell=1,\dots,R$. Orthogonality of the Fourier projections
and of the $\mathsf F_m$ gives
\[
\widehat p_\ell^*=\widehat p_\ell,\qquad
\widehat p_\ell\widehat p_k=\delta_{\ell k}\widehat p_\ell,
\qquad \sum_\ell\widehat p_\ell=1.
\]
Thus $(p_\ell)$ is a partition of unity. Moreover,
$(\widehat p_\ell)_{jj}=R^{-1}1_\N$, so
\eqref{eq:traceR} gives $\tau(p_\ell)=1/R$.

Since $\Phi(e_1)$ is the
matrix unit with $e_1$ in the $(1,1)$ slot,
\[
\begin{aligned}
\Phi(e_1p_\ell e_1)&=\Phi(e_1)\widehat p_\ell\Phi(e_1)\\
&=\Bigl(\sum_m(P_{\ell+m})_{11}\mathsf F_m\Bigr)\oplus0\\
&=\frac1R\Bigl(\sum_m\mathsf F_m\Bigr)\oplus0=\frac1R\Phi(e_1),
\end{aligned}
\]
i.e.\ $e_1p_\ell e_1=\frac1Re_1$.  Hence
$\norm{ep_\ell}^2\le\norm{e_1p_\ell}^2=\norm{e_1p_\ell e_1}=\frac1R$,
using $e\le e_1$.

Since $X_1=eX_1e$,
\[
\begin{aligned}
p_\ell Xp_\ell&=p_\ell X_0p_\ell+(p_\ell e)X_1(ep_\ell),\\
\norm{p_\ell Xp_\ell}&\le\norm{X_0}+\norm{p_\ell e}\,\norm{X_1}\,
\norm{ep_\ell}\\
&\le2s+\frac2R=\rho .
\end{aligned}
\]

By \eqref{eq:traceR} with $x=X$,
$y=p_\ell$, then \eqref{eq:equi} and \eqref{eq:fourieravg},
\begin{align*}
\tau(p_\ell Xp_\ell)&=\tau(Xp_\ell)\\
&=\frac1R\sum_{j,k}\tau_\N\Bigl(T_{jk}\sum_m(P_{\ell+m})_{kj}
\mathsf F_m\Bigr)\\
&=\frac1R\sum_{j,k}\sum_m(P_{\ell+m})_{kj}\,
\frac{\tau_\N(T_{jk})}R\\
&=\frac1R\sum_{j,k}\tau_\N(T_{jk})\cdot\frac1R\,\delta_{jk}\\
&=\frac1{R^2}\sum_j\tau_\N(T_{jj})=0
\end{align*}
by \eqref{eq:diagsum}.  This completes the proof.
\end{proof}

\section{Hierarchical assembly and the factor bound}
\label{sec:tree}

We assemble commutator representations on the terminal corners of
a tree of projections. This construction works in any unital
$C^*$-algebra $\M$: the estimates below ensure norm convergence of
both factors, even when the tree has infinite branches.
We record the decay and representation bounds required of the tree.

\begin{definition}\label{def:tree}
Let $r\ge2$ be an integer. An \emph{assembly tree with branching
bound $r$} is obtained from $1\in\M$ by recursively splitting
selected projections as
\[
p=c_1+\cdots+c_{m(p)},\qquad 2\le m(p)\le r,
\]
where the $c_i\in\M$ are nonzero mutually orthogonal projections,
called the children of $p$. We require $1$ to be split and allow
infinitely many levels. Projections that are split are called
\emph{internal nodes}; those left unsplit are \emph{leaves}.

The root $1$ has level zero, and children have level one greater
than their parent. Write $\lev(p)$ for the level of $p$, and
$N_j,L_j$ for the internal nodes and leaves at level $j$.

Let $A\in\M$, $0<\rho<1$ and $\kappa\ge0$. The tree, together
with chosen representations on its leaves, is
\emph{$(\rho,\kappa)$-admissible for $A$} if:
\begin{enumerate}[label=(A\arabic*)]
\item\label{A1} For every internal node $p$,
\[
\norm{pAp}\le\rho^{\lev(p)}\norm A.
\]
\item\label{A2} At each leaf $q$, the chosen factors $u_q,v_q\in q\M q$
satisfy
\[
qAq=\comm{u_q}{v_q},\qquad
\norm{u_q}\norm{v_q}\le\kappa\rho^{\lev(q)}\norm A.
\]
\end{enumerate}
For $qAq=0$ we take $u_q=v_q=0$.
\end{definition}

\noindent\textit{Example.}
Let $p_1,p_2,q\in\M$ be nonzero mutually orthogonal projections
with $p_1+p_2+q=1$, and put $p=p_1+p_2$.
First split $1=p+q$, then split $p=p_1+p_2$, leaving $q$ unsplit:
\begin{center}
\begin{tikzpicture}[x=1cm,y=1cm,
  every node/.style={font=\small},
  internal/.style={circle,draw,minimum size=7mm,inner sep=1pt},
  leaf/.style={rectangle,draw,minimum size=7mm,inner sep=2pt}]
\node[internal] (root) at (0,0) {$1$};
\node[internal] (p) at (-1.2,-1.1) {$p$};
\node[leaf] (q) at (1.2,-1.1) {$q$};
\node[leaf] (p1) at (-2,-2.2) {$p_1$};
\node[leaf] (p2) at (-0.4,-2.2) {$p_2$};
\draw (root) -- (p) (root) -- (q) (p) -- (p1) (p) -- (p2);
\node[anchor=west] at (2.2,0) {level $0$};
\node[anchor=west] at (2.2,-1.1) {level $1$};
\node[anchor=west] at (2.2,-2.2) {level $2$};
\end{tikzpicture}
\end{center}
Circles mark internal nodes and squares mark leaves. Here
$N_0=\{1\}$, $N_1=\{p\}$, $L_1=\{q\}$ and
$L_2=\{p_1,p_2\}$; all other $N_j,L_j$ are empty.
This is an assembly tree with branching bound $2$.
To make it admissible for $A$, one must also satisfy
\textup{(A1)} at $1,p$ and choose representations satisfying
\textup{(A2)} at $q,p_1,p_2$.

Any two nodes are orthogonal or comparable, and
$N_j\cup\bigcup_{i\le j}L_i$ is a finite partition of unity.
The next theorem assembles the leaf representations and the
interactions between sibling corners into one commutator, provided
the compression norms decay sufficiently rapidly.

\begin{theorem}\label{thm:tree}
Let $r\ge2$ be an integer, put
\[
g_r=\sqrt{r/2},\qquad
\gamma=\frac1{8g_r+2}=\frac1{4\sqrt{2r}+2},
\]
and let $0<\rho\le\gamma/2$, $\kappa\ge0$.  Suppose $A\in\M$
admits a $(\rho,\kappa)$-admissible assembly tree with branching bound $r$.
Then there exist $B,C\in\M$ with
\[
A=\comm BC,\qquad
\norm B\le\tfrac18,\qquad
\norm C\le\bigl(4\sqrt{2r}+2\bigr)
\Bigl(\tfrac83(r-1)+16\kappa\rho\Bigr)\norm A .
\]
In particular
$\norm B\norm C\le\bigl(3\,r^{3/2}+\kappa\bigr)\norm A$.
\end{theorem}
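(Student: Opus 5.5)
We build $B$ as a block-diagonal "label" operator adapted to the tree, and $C$ by solving Sylvester equations, one for each pair of siblings. By homogeneity assume $\norm A=1$.

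\textbf{Construction of $B$.} For each internal node $p$ with children $c_1,\dots,c_{m(p)}$, assign the distinct scalar labels $z_i=\gamma^{\lev(p)+1}(i-1)/(8g_r)$, or a comparable scaling, to the children. For a node $c$, let $\beta(c)$ be the sum of the labels along the path from the root to $c$. Set
\[
B=\sum_{q\text{ leaf}}\bigl(\beta(q)q+u_q'\bigr),
\]
where $u_q'$ is the leaf factor $u_q$ rescaled so that $\norm{u_q'}$ is tiny relative to $\gamma^{\lev(q)}$. Correspondingly $v_q'$ is enlarged, keeping $\norm{u_q'}\norm{v_q'}\le\kappa\rho^{\lev(q)}$. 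For infinite branches, $B$ is defined as the norm limit of the level-$j$ truncations. Since the labels decay geometrically, $\sum_j\gamma^j$ converges, and the level-$j$ truncations form a Cauchy sequence; this also gives $\norm B\le\frac18$.

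Two facts about $B$ drive the argument. First, for a node $c$, the compression $cBc-\beta(c)c$ has norm at most about $\sum_{k>\lev(c)}\gamma^k\cdot(r-1)/(8g_r)$, which is small compared with the gap between sibling labels at that level. Second, sibling labels differ by at least $\gamma^{\lev(p)+1}/(8g_r)$. The constant $\gamma=1/(8g_r+2)$ should be exactly what makes the Sylvester condition $|z-w|>\norm U+\norm V$ in \eqref{eq:sylvbound} hold with a definite margin.

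\textbf{Construction of $C$.}
\begin{enumerate}[label=(\roman*)]
\item For each internal node $p$ and each pair of distinct children $c_i,c_j$, solve $\comm{B}{C_{ij}}=c_iAc_j$ in $c_i\M c_j$ by \eqref{eq:sylvbound}. This uses $c_iBc_i=\beta(c_i)c_i+U$, the fact that $B$ is block diagonal with respect to the children, and $\norm{c_iAc_j}\le\norm{pAp}\le\rho^{\lev(p)}$ from (A1). Dividing by the gap gives $\norm{C_{ij}}\lesssim(8g_r+2)\rho^{\lev(p)}\gamma^{-\lev(p)-1}$.
\item The sibling-off-diagonal pieces at $p$ form a hollow block operator with at most $r$ blocks. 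Bound it by \eqref{eq:hollowbound}, giving a factor $(r-1)$. Because $\rho\le\gamma/2$, the factor $(\rho/\gamma)^{\lev(p)}\le2^{-\lev(p)}$ sums geometrically.
\item At the leaves, take $C_q=v_q'$. Their norms are controlled by $\kappa(\rho/\gamma)^{\lev(q)}$ times constants, giving the $16\kappa\rho$ term.
\end{enumerate}

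\textbf{Verification.} Let $C$ be the norm-convergent sum of all these pieces. Since $B$ commutes with each node up to the block structure, $\comm BC$ decomposes as follows:
\begin{itemize}
\item the off-diagonal pieces at every internal node reproduce exactly the $c_iAc_j$ with $i\ne j$;
\item the leaf diagonal pieces give $qAq$, because $\comm{u_q'}{v_q'}=qAq$ and $\beta(q)q$ commutes with $v_q'$.
\end{itemize}
Summing over the tree reconstructs $A$. This uses that $\sum_j\norm{A-\sum_{N_j\cup\bigcup_{i\le j}L_i}\text{diag}}\to0$, since the internal-node compressions at level $j$ have norm at most $\rho^j$.

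\textbf{Main obstacle.} The delicate point is the Sylvester step in (i). The compression $c_iBc_i$ is not scalar: it contains all deeper labels and the leaf factors $u_q'$. Its deviation from $\beta(c_i)$ must stay strictly below half the sibling gap uniformly at every level. This bookkeeping fixes both $\gamma$ and the rescaling of the $u_q$, and I expect it to dictate the explicit constants $4\sqrt{2r}+2$ and $\frac83(r-1)$.
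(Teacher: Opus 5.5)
Your architecture is the paper's: a tree-diagonal $B$ built from accumulated geometric centers plus rescaled leaf factors, one Sylvester equation per ordered pair of siblings, the hollow-block estimate, leaf pieces $v_q'$, and a telescoping limit. The gap is at the step you flag as the main obstacle, and it is not just bookkeeping: real, equally spaced labels cannot work with the prescribed $\gamma$.

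With your labels $\gamma^{j}(i-1)/(8g_r)$ at level $j$, the sibling gap is $\gamma^j/(8g_r)$. A path that always takes the largest label gives a deviation $\norm{cBc-\beta(c)c}$ of up to
\[
\frac{r-1}{8g_r}\sum_{k>j}\gamma^k=\frac{(r-1)\gamma^{j}}{8g_r(8g_r+1)},
\]
even ignoring the leaf factors. The Sylvester estimate \eqref{eq:sylvbound} needs the gap to exceed twice this, i.e. $2(r-1)<8g_r+1=4\sqrt{2r}+1$. This fails for every $r\ge11$, in particular for $r=2^{12}$ in the proof of Theorem~\ref{thm:factor}.

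Centering the labels gains only a factor $2$. On a line, $r$ labels have spread-to-gap ratio about $r/2$, so you would need a per-level scale $\gamma'\lesssim1/r$. The hypothesis only gives $\rho\le\gamma/2\approx1/(8\sqrt{2r})$, so $\rho/\gamma'$ can exceed $1$; in the application $\rho=2^{-10}>2^{-12}=1/r$. Then your bound $\norm{C^{(j)}}\lesssim(r-1)\rho^{j-1}\gamma'^{-j}$ grows geometrically and the series for $C$ is not controlled. Even for $r\le10$, where the margin is positive, your estimate $\norm{C_{ij}}\lesssim(8g_r+2)\rho^{j-1}\gamma^{-j}$ is larger than what is needed by a factor of order $g_r$. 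So the stated constants $\frac83(r-1)$ and $3r^{3/2}$ cannot come out of a linear arrangement.

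The missing idea is to use \emph{complex} centers. Take $r$ points of the $\lceil\sqrt r\rceil\times\lceil\sqrt r\rceil$ unit grid centered at $0$; they have pairwise distance $\ge1$ and modulus $\le g_r=\sqrt{r/2}$. Assign children at level $j$ centers from $\gamma^j$ times this set. Rescale each leaf factor to norm exactly $\gamma^j/8$, not merely ``tiny'': shrinking $u_q'$ inflates $v_q'$ by the same factor and spoils the $16\kappa\rho$ term.

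Each level-$k$ contribution to $B$ is then block diagonal with block norms $\le(g_r+\frac18)\gamma^k$. The value of $\gamma$ is exactly the identity $(g_r+\frac18)\gamma/(1-\gamma)=\frac18$. This gives both $\norm B\le\frac18$ and $\norm{cBc-z_cc}\le\gamma^j/8$ for a node at level $j$. The Sylvester margin becomes $\gamma^j-2\gamma^j/8=\frac34\gamma^j$, hence $\norm{C_{p;i,s}}\le\frac43\rho^{j-1}\gamma^{-j}$. Combining this with the hollow-block bound and $\norm{\widehat v_q}\le8\kappa(\rho/\gamma)^j$ yields exactly $\norm C\le\gamma^{-1}\bigl(\frac83(r-1)+16\kappa\rho\bigr)$. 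The $O(\sqrt r)$ spread of a planar label set, rather than the $O(r)$ spread of a linear one, is what makes $\gamma\sim r^{-1/2}$ admissible and produces the $r^{3/2}$ in the final bound.
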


\begin{proof}[Proof of Theorem \ref{thm:tree}]

The case $A=0$ is immediate. Normalize $\norm A=1$ and put
$\beta=1/8$. Choose $r$ distinct points
$\zeta^{(1)},\ldots,\zeta^{(r)}$ from the
$\lceil\sqrt r\rceil\times\lceil\sqrt r\rceil$ unit grid centered
at zero. Their moduli are at most $g_r=\sqrt{r/2}$ and their
pairwise distances are at least $1$. We shall repeatedly use the identity

\begin{equation}\label{eq:magic}
(g_r+\beta)\,\frac\gamma{1-\gamma}
=\frac{g_r+\tfrac18}{8g_r+1}
=\frac18,
\qquad\text{hence}\qquad
\sum_{j\ge1}(g_r+\beta)\gamma^j=\frac18 .
\end{equation}

For each internal node $p$ with children
at level $j=\lev(p)+1$, assign to its $m(p)\le r$ children
distinct centers
$\zeta_c\in\gamma^j\cdot\{\zeta^{(1)},\dots,\zeta^{(r)}\}$ from
the chosen grid scaled by $\gamma^j$; thus for
siblings $i\ne s$ at level $j$,
\begin{equation}\label{eq:sep}
|\zeta_i|\le g_r\gamma^j,\qquad
|\zeta_i-\zeta_s|\ge\gamma^j .
\end{equation}
Set $z_1=0$ at the root and define accumulated centers
recursively by $z_c=z_p+\zeta_c$ for a child $c$ of $p$.

For each leaf $q$ at level $j$
with $qAq\ne0$, rescale its representation: put
\[
\widehat u_q=\theta_q u_q,\qquad
\widehat v_q=\theta_q^{-1}v_q,\qquad
\theta_q=\frac{\beta\gamma^{j}}{\norm{u_q}},
\]
so that $\comm{\widehat u_q}{\widehat v_q}=qAq$,
$\norm{\widehat u_q}=\beta\gamma^{j}$, and, by \ref{A2},
\begin{equation}\label{eq:vhat}
\norm{\widehat v_q}
=\frac{\norm{u_q}\norm{v_q}}{\beta\gamma^{j}}
\le8\kappa\Bigl(\frac\rho\gamma\Bigr)^{j} .
\end{equation}
For $qAq=0$ put $\widehat u_q=\widehat v_q=0$.  Define, for
$j\ge1$,
\[
B^{(j)}
=\sum_{c:\,\lev(c)=j}\zeta_c\,c
\ +\sum_{q\in L_j}\widehat u_q ,
\qquad
B=\sum_{j\ge1}B^{(j)} .
\]
Each $B^{(j)}$ is block-diagonal over the level-$j$ nodes, with
block norms $\le(g_r+\beta)\gamma^j$; by \eqref{eq:magic} the
series converges in norm and
$\norm B\le\tfrac18$.

Every node projection commutes with $B$: node projections are
nested or orthogonal, and each leaf term is supported on a leaf,
which has no descendants. This property passes to the norm limit.
We call $B$ \emph{tree-diagonal} and set
$B_c=cBc$ and $D_c=B_c-z_cc$ for each node $c$.

\noindent\textit{Claim.}
For every node $c$ at level $j\ge1$,
$\norm{D_c}\le\gamma^{j}/8$.

\noindent\textit{Proof of the claim.}
If $c$ is a leaf, $B_c=z_cc+\widehat u_c$, so
$\norm{D_c}=\norm{\widehat u_c}\le\beta\gamma^j=\gamma^j/8$.  If $c$
is internal, $D_c$ collects the level-$j'$ increments and
$\widehat u$'s inside $c$ for all $j'>j$; the level-$j'$
contribution is block-diagonal with block norms
$\le(g_r+\beta)\gamma^{j'}$, so
\[
\norm{D_c}\le\sum_{j'\ge j+1}(g_r+\beta)\gamma^{j'}
=(g_r+\beta)\frac{\gamma^{j+1}}{1-\gamma}
=\frac{\gamma^{j}}8
\]
by \eqref{eq:magic}.

Fix $j\ge1$ and an internal node
$p\in N_{j-1}$ with children $c_1,\dots,c_m$ at level $j$.  For
each ordered pair $i\ne s$ solve for $C_{p;i,s}\in c_i\M c_s$:
\begin{equation}\label{eq:syl-eq}
B_{c_i}C_{p;i,s}-C_{p;i,s}B_{c_s}=c_iAc_s .
\end{equation}
The sibling centers satisfy
$z_{c_i}-z_{c_s}=\zeta_{c_i}-\zeta_{c_s}$.
Writing $B_{c_i}=z_{c_i}c_i+D_{c_i}$, \eqref{eq:syl-eq} becomes
$(\zeta_{c_i}-\zeta_{c_s})C_{p;i,s}+D_{c_i}C_{p;i,s}-C_{p;i,s}D_{c_s}
=c_iAc_s$.  By \eqref{eq:sep} and the claim,
\[
|\zeta_{c_i}-\zeta_{c_s}|-\norm{D_{c_i}}-\norm{D_{c_s}}
\ \ge\ \gamma^j-2\cdot\frac{\gamma^j}8=\frac{3\gamma^j}4>0,
\]
so the Sylvester estimate \eqref{eq:sylvbound} provides a solution.
Using \ref{A1} and $\norm{c_iAc_s}\le\norm{pAp}$, we obtain
\begin{equation}\label{eq:Cis}
\norm{C_{p;i,s}}
\le\frac{\norm{pAp}}{(3/4)\gamma^{j}}
\le\frac43\,\frac{\rho^{j-1}}{\gamma^{j}} .
\end{equation}
Define
\[
C^{(j)}
=\sum_{p\in N_{j-1}}\ \sum_{\substack{1\le i,s\le m(p)\\i\ne s}}
C_{p;i,s}
\ +\ \sum_{q\in L_j}\widehat v_q,
\qquad
C=\sum_{j\ge1}C^{(j)} .
\]
Fix $p\in N_{j-1}$.  Inside $p$, the sibling terms form a hollow
block matrix over $\le r$ blocks, of norm
$\le(r-1)\cdot\frac43\rho^{j-1}\gamma^{-j}$ by
the hollow block estimate \eqref{eq:hollowbound} and \eqref{eq:Cis}; the leaf terms
$\widehat v_q$ are block-diagonal of norm
$\le8\kappa(\rho/\gamma)^j$ by \eqref{eq:vhat}.  Across the
different $p\in N_{j-1}$ the pieces are orthogonal blocks, so
\begin{equation}\label{eq:Cj}
\norm{C^{(j)}}
\le\Bigl(\frac43\,\frac{r-1}\rho+8\kappa\Bigr)
\Bigl(\frac\rho\gamma\Bigr)^{j} .
\end{equation}
Since $\rho/\gamma\le\tfrac12$, the series converges in norm and
\begin{equation}\label{eq:Cnorm}
\norm C
\le\Bigl(\frac43\,\frac{r-1}\rho+8\kappa\Bigr)
\frac{2\rho}\gamma
=\frac1\gamma\Bigl(\frac83(r-1)+16\kappa\rho\Bigr),
\end{equation}
which is the bound in the statement.

Since $B$ is tree-diagonal and
$C_{p;i,s}=c_iC_{p;i,s}c_s$,
\[
\comm B{C_{p;i,s}}
=Bc_iC_{p;i,s}-C_{p;i,s}c_sB
=B_{c_i}C_{p;i,s}-C_{p;i,s}B_{c_s}
=c_iAc_s
\]
by \eqref{eq:syl-eq}.  Likewise, for a leaf $q$ at level $j$,
$\widehat v_q\in q\M q$ and $B_q=z_qq+\widehat u_q$, so the scalar
drops out:
$\comm B{\widehat v_q}=\comm{\widehat u_q}{\widehat v_q}=qAq$.
Summing over the finitely many terms of $C^{(j)}$,
\begin{equation}\label{eq:levelj}
\comm B{C^{(j)}}
=\sum_{p\in N_{j-1}}
\Bigl(pAp-\sum_{c\text{ child of }p}cAc\Bigr)
+\sum_{q\in L_j}qAq .
\end{equation}
We claim that for every $J\ge0$,
\begin{equation}\label{eq:telescope}
\sum_{j=1}^{J}\comm B{C^{(j)}}
= A-\sum_{p\in N_J}pAp .
\end{equation}
For $J=0$, both sides vanish since $N_0=\{1\}$.  Assuming
\eqref{eq:telescope} for $J$, adding \eqref{eq:levelj} with
$j=J+1$ increases the left side by
$\sum_{p\in N_J}pAp-\sum_{c\in N_{J+1}}cAc$, because the children
of the nodes of $N_J$ are exactly $L_{J+1}\sqcup N_{J+1}$.  This
is the required change of the right side.

Finally, $\sum_{p\in N_J}pAp$ is block-diagonal, so by \ref{A1}
its norm is at most
$\rho^J\to0$.  Since
$C=\lim_J\sum_{j\le J}C^{(j)}$ in norm and multiplication is
norm-continuous on bounded sets,
\[
\comm BC=\lim_{J\to\infty}\sum_{j=1}^J\comm B{C^{(j)}}
=\lim_{J\to\infty}\Bigl(A-\sum_{p\in N_J}pAp\Bigr)=A .
\]

Finally, by the construction of $B$ and
\eqref{eq:Cnorm},
\[
\norm B\norm C
\le\frac18\bigl(4\sqrt{2r}+2\bigr)
\Bigl(\frac83(r-1)+16\kappa\rho\Bigr).
\]
Since $r\ge2$ and $\rho\le\gamma/2$,
\[
\frac13(4\sqrt{2r}+2)(r-1)
\le\left(\frac{4\sqrt2}{3}+\frac{2}{3\sqrt2}\right)r^{3/2}
\le3r^{3/2},\qquad
\frac{2\kappa\rho}{\gamma}\le\kappa.
\]
Thus $\norm B\norm C\le3r^{3/2}+\kappa$.
\end{proof}

\subsection{Proof of Theorem~\ref{thm:factor}}\label{sec:complete}

\begin{proof}
Set
\[
R=2^{12},\qquad s=2^{-12},\qquad t=2^{-19},\qquad
\rho=2^{-10}.
\]
Then $t\le s/\sqrt{2R}$, $\rho=2s+2/R$, and
\[
\rho\le\frac1{8\sqrt{2R}+4},
\]
because $8\sqrt{2R}+4<729<2^{10}$.  Thus
Theorem~\ref{thm:mass} applies to every low-$L^2$ element in every
corner, while Theorem~\ref{thm:high} applies to the complementary
high-$L^2$ case.

We may assume $A\ne0$.  If $\norm A_2\ge t\norm A$, the conclusion
follows directly from Theorem~\ref{thm:high}.  Otherwise construct an
assembly tree recursively.  At a node $p$ carrying the element
$X_p=pAp$ in the corner $p\M p$, use the normalized corner trace and
$L^2$-norm.  If $X_p=0$, make $p$ a leaf and use the zero
representation.  If $\norm{X_p}_2\ge t\norm{X_p}$, make $p$ a leaf
and use Theorem~\ref{thm:high}; the resulting representation has
product at most $K_H(t)\norm{pAp}$.  In the remaining case,
Theorem~\ref{thm:mass} supplies $R$ children whose diagonal corners
have trace zero and norm at most $\rho\norm{pAp}$.

The root is internal in this case.  Induction down the tree gives
\[
\norm{pAp}\le\rho^{\lev(p)}\norm A
\]
for every node, including the leaves: each child compression has
norm at most $\rho$ times that of its parent. At a leaf $q$,
the chosen representation therefore has norm product at most
$K_H(t)\rho^{\lev(q)}\norm A$. Both admissibility conditions
hold with $\kappa=K_H(t)$.  Theorem~\ref{thm:tree}
therefore gives
\[
A=\comm BC,\qquad
\norm B\norm C\le\bigl(3R^{3/2}+K_H(t)\bigr)\norm A
 =\bigl(3\cdot2^{18}+K_H(2^{-19})\bigr)\norm A.
\]
This is the asserted estimate.
\end{proof}

\section{Passage to arbitrary finite von Neumann algebras}
\label{sec:finite}

We complete the proof of Theorem~\ref{thm:main} by combining
Theorem~\ref{thm:factor} with the matrix theorem of
\cite[Theorem~1.1]{DIC}. Here $\M$ is a general finite von Neumann
algebra, and we use its normalized center-valued trace
\[
T_\M:\M\longrightarrow Z(\M).
\]
This is the faithful normal conditional expectation onto the
center satisfying
$T_\M(XY)=T_\M(YX)$. We use its standard properties, including
$Z(\M)$-linearity and contractivity; see \cite[Section~8.2]{KR}.
For a central projection $z$, its restriction to $z\M$ is the
normalized center-valued trace of that summand, with unit $z$.

We pass from finite factors to general finite algebras by selecting
uniformly bounded commutator factors in a central decomposition.
To remove the separability assumption, we construct a subalgebra
with separable predual in which the center-valued trace of the
given element still vanishes. The general case then follows by
assembling the factors over orthogonal central summands.

\subsection{Bounded measurable selection}

To assemble representations in a direct integral, the two factors
must be both measurable and essentially bounded. The following
lemma obtains such sections from uniformly bounded solutions in
the individual fibers.

\begin{lemma}\label{lem:measurablecomm}
Let $(X,\mu)$ be a standard probability space and
$(\M_x)_{x\in X}$ a measurable field of von Neumann algebras with
separable preduals. Let $x\mapsto A_x\in\M_x$ be a measurable
section, and let $L>0$. Suppose that, for almost every $x$, there
are $b,c\in\M_x$ satisfying
\[
\comm bc=A_x,\qquad\norm b,\norm c\le L.
\]
Then there are measurable sections $x\mapsto B_x,C_x\in\M_x$
satisfying these conditions almost everywhere. In particular,
these sections define elements of the direct integral algebra.
\end{lemma}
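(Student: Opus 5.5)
We will deduce the lemma from a measurable selection theorem (Jankov--von Neumann, or equivalently the Kuratowski--Ryll-Nardzewski / Aumann selection theorem). The first step is to model the measurable field concretely. Since the preduals are separable, after discarding a null set and partitioning $X$ into countably many measurable pieces on which the Hilbert space dimension is constant, we may assume $\M_x\subseteq B(H)$ for a fixed separable Hilbert space $H$. The measurability of the field then means that there is a countable family of measurable sections $x\mapsto a_n(x)$ whose values generate $\M_x$ (equivalently, are strongly dense in its unit ball) for almost every $x$. The closed ball $\mathcal B_L=\{y\in B(H):\norm y\le L\}$ with the strong$^*$ (or weak) operator topology is a compact metrizable, hence Polish, space. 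Multiplication is jointly strong$^*$ continuous on bounded sets, and the same holds for the commutator map $(b,c)\mapsto bc-cb$ on $\mathcal B_L\times\mathcal B_L$.

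Next we will define
\[
\Gamma=\{(x,b,c)\in X\times\mathcal B_L\times\mathcal B_L:\ b,c\in\M_x,\ bc-cb=A_x\}.
\]
We must show that $\Gamma$ is a Borel (or at least analytic) subset of the product. The condition $bc-cb=A_x$ is Borel, because $x\mapsto A_x$ is measurable, and modifying on a null set we may take it Borel. The commutator map is continuous, and equality is tested against a countable dense set of vectors. The condition $b\in\M_x$ is encoded through the commutant: $b\in\M_x=\M_x''$ if and only if $b$ commutes with a countable strongly dense subset of $\M_x'$. The commutant field is again a measurable field of von Neumann algebras with measurable generating sections $a'_m(x)$, a standard fact in the theory of direct integrals. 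Hence $b\in\M_x$ if and only if $ba'_m(x)=a'_m(x)b$ for all $m$, which is a countable intersection of Borel conditions. This gives that $\Gamma$ is Borel. By hypothesis, its projection to $X$ contains a conull set.

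By the Jankov--von Neumann uniformization theorem, there is a $\mu$-measurable (universally measurable) map $x\mapsto(B_x,C_x)$ defined on the projection of $\Gamma$ whose graph lies in $\Gamma$. After changing it on a null set, it becomes Borel, and we set it equal to $0$ on the exceptional null set. Measurability for the strong$^*$ topology implies weak measurability of $x\mapsto\langle B_x\xi,\eta\rangle$, so $(B_x)$ and $(C_x)$ are measurable sections, essentially bounded by $L$. They therefore define elements $B,C$ of $\int^\oplus\M_x\,d\mu$ with $\comm BC=A$ fiberwise almost everywhere, and hence $\comm BC=A$.

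The main obstacle is the Borel description of the condition $b\in\M_x$, which relies on having measurable generators of the commutant field. If that is awkward to justify, we would instead parametrize $\M_x$ directly: we would use the countable measurable generating family $a_n(x)$ and describe $b\in\M_x$ as membership in the strong closure of noncommutative $*$-polynomials with rational coefficients in the $a_n(x)$. Expressed through distances to a countable set in the Polish ball, this is again a Borel condition.
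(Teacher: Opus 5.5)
Your proposal is correct and follows essentially the paper's route. Both arguments form the Borel set of norm-bounded solution pairs over $X$, observe that its analytic projection is conull, apply the Jankov--von Neumann uniformization theorem, and pass to a Borel representative off a null set. The difference lies in how the Borel structure is obtained: you build it by hand in a concrete model $\M_x\subseteq B(H)$, encoding $b\in\M_x$ through measurable generators of the commutant field or through Kaplansky density, whereas the paper cites the standard Borel structure on $\bigsqcup_x\M_x$ of Vaes--Wouters, with its Borel operations and Borel norm. One small slip: $\mathcal B_L$ is compact metrizable in the weak operator topology but only Polish, not compact, in the strong$^*$ topology; this is harmless, since the two topologies have the same Borel sets on bounded balls.
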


\begin{proof}
Equip the measurable field with the standard Borel structure of
\cite[Section~8]{VW}. After restricting
to a conull Borel subset of $X$, the total space
$\mathscr M=\bigsqcup_{x\in X}\M_x$ is standard Borel, its
projection $\pi:\mathscr M\to X$ is Borel, and the section $A$ is
Borel. In this structure addition, scalar multiplication, adjoint
and multiplication are Borel on their respective domains;
see \cite[Proposition~5.4, Definition~8.1 and
Propositions~8.4, 8.8]{VW}.
The norm is Borel as well. Indeed, a norm-dense sequence of Borel
sections $\omega_n(x)$ in the unit balls of the preduals gives
\[
\norm b=\sup_n|\omega_n(\pi(b))(b)|,
\]
and each evaluation on the right is Borel.

Consequently the fiber product
\[
\mathscr P_L=
\{(b,c)\in\mathscr M\times\mathscr M:
\pi(b)=\pi(c),\ \norm b,\norm c\le L\}
\]
is a standard Borel space, and its subset
\begin{equation}\label{eq:commutatorgraph}
\mathscr S=
\{(b,c)\in\mathscr P_L:bc-cb=A_{\pi(b)}\}
\end{equation}
is Borel, since the two sides of the defining equation are Borel
maps into the standard Borel space $\mathscr M$.
The projection of $\mathscr S$ onto $X$ is analytic and, by
hypothesis, conull. Restricting to a conull Borel subset, we may
therefore assume that every fiber is nonempty.

Apply the Jankov--von Neumann uniformization theorem
\cite[Theorem~18.1]{Kechris} to the Borel relation
\[
\{(x,(b,c))\in X\times\mathscr S:\pi(b)=x\}.
\]
It gives a section $x\mapsto(B_x,C_x)$ measurable for the
completion of $\mu$. Since the target is standard Borel, this
section has a Borel representative on a conull Borel subset.
The equation and norm bounds hold on this conull set. The
identification of the abstract and concrete Borel structures in
\cite[Proposition~8.4]{VW} shows that the selected sections are
measurable operator fields. Their essential boundedness gives
elements of the direct integral algebra with norms at most $L$.
\end{proof}

\subsection{Separable reduction}

We next remove the separability assumption for algebras admitting
a faithful normal tracial state. The required subalgebra must
contain the given element and preserve vanishing of its
center-valued trace.

\begin{lemma}\label{lem:separablereduction}
Suppose that $\M$ has a faithful normal tracial state and
$T_\M(A)=0$. There is a unital von Neumann subalgebra
$\N\subseteq\M$ with separable predual such that
$A\in\N$ and $T_\N(A)=0$.
\end{lemma}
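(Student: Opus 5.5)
We construct $\N$ by a countable closure argument. The goal is a separable subalgebra that contains $A$ and is stable enough under the center-valued trace $T_\M$ that $T_\M$ restricts to a center-valued trace on $\N$. Let $\tau$ be a faithful normal tracial state on $\M$, and let $E=T_\M$.

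The key observation is the following. Suppose $\N\subseteq\M$ is a unital von Neumann subalgebra with $E(\N)\subseteq Z(\N)$. Then $E|_\N$ is a faithful normal conditional expectation of $\N$ onto a subalgebra of $Z(\N)$ satisfying $E(xy)=E(yx)$. For $x\in\N$ and central $z\in Z(\N)\subseteq\N$ we have $T_\N(E(x))=E(x)$, since $E(x)\in Z(\N)$. By uniqueness of the center-valued trace this forces $T_\N=T_\N\circ E$. One way to see it: every $\ker E$ element of $\N$ lies in the norm closure of commutators in $\N$? That is not automatic, so I would argue instead via the characterization that $T_\N(x)$ is the unique element of $Z(\N)\cap\overline{\mathrm{conv}}\{uxu^*:u\in U(\N)\}$. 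Since $E(uxu^*)=E(x)$ and $E$ is normal, $E(T_\N(x))=E(x)$. Because $T_\N(x)\in Z(\N)$, we get $T_\N(A)$ with $E(T_\N(A))=E(A)=0$.

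We then need to pass from $E(T_\N(A))=0$ to $T_\N(A)=0$. For this we would additionally arrange $Z(\N)\subseteq Z(\M)$, so that $E$ is the identity on $Z(\N)$. The real requirement is therefore to build $\N$ with separable predual, $A\in\N$, and $Z(\N)\subseteq Z(\M)$ (equivalently $Z(\N)=\N\cap Z(\M)$, with enough of $Z(\M)$ included).

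The construction proceeds in stages. Start with $\N_0=W^*(A,E(A))$, the von Neumann algebra generated by $A$ (together with $E(A)=0$), which is countably generated and hence has separable predual because it sits in $\M$ with a faithful normal state. Inductively, given a countably generated $\N_k$, do two things. First, adjoin $E(x)$ for $x$ in a countable $\|\cdot\|_2$-dense subset of $\N_k$. Second, for each element of a countable dense subset of $Z(\N_k)$, adjoin countably many unitaries $u\in\M$ together with $E$-averaging data witnessing that $z$ is either central in $\M$ or not. Concretely, Dixmier approximation applied in $\M$ gives countably many unitaries $u_{n}$ of $\M$ with convex combinations of $u_nzu_n^*$ converging in norm to $E(z)$; adjoin these. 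Let $\N=(\bigcup_k\N_k)''$. Then $\N$ is countably generated and so has separable predual. Moreover $E(\N)\subseteq\N$ by normality and density.

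For $z\in Z(\N)$, the adjoined unitaries show that $E(z)$ lies in the closed convex hull of $\{uzu^*:u\in U(\N)\}$. This set is $\{z\}$ because $z$ is central in $\N$, so $z=E(z)\in Z(\M)$. This density argument is delicate: the witnessing unitaries were added only for a dense set of central elements at earlier stages, and the approximation must pass to limits. We would handle this using $\|\cdot\|_2$-continuity of $E$ and of conjugation.

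The main obstacle is this last step, namely guaranteeing $Z(\N)\subseteq Z(\M)$. Once it holds, $E|_\N$ is a center-valued trace on $\N$ that is identity on $Z(\N)$, so $T_\N=E|_\N$ and $T_\N(A)=E(A)=0$.
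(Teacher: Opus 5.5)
Your reduction is sound: for any von Neumann subalgebra $\N\ni A$, Dixmier averaging in $\N$ gives $T_{\M}(T_{\N}(A))=T_{\M}(A)=0$. So it would suffice to have $Z(\N)\subseteq Z(\M)$.

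\textbf{The gap.} The step you flag as delicate is never proved. You assert that for every $z\in Z(\N)$ ``the adjoined unitaries show that $T_{\M}(z)$ lies in the closed convex hull of $\{uzu^*\}$''. This is not justified. Averaging data was recorded only for elements $y$ of countable $\norm{\cdot}_2$-dense subsets of the $Z(\N_k)$. Such $y$ are typically not central in $\N$, and a given $z\in Z(\N)$ typically lies in no $\N_k$. Centers are not monotone along an increasing union, so you must first show that $z$ is a $\norm{\cdot}_2$-limit of elements of $\bigcup_k Z(\N_k)$. Continuity of $T_{\M}$ and of conjugation alone does not give this.

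\textbf{The missing idea.} Averaging cannot move you away from an element that is central in the big algebra. For $x\in\N_k$ and $u\in\mathcal U(\N_k)$ one has $uxu^*-z=u(x-z)u^*$. Hence Dixmier averaging in $\N_k$ gives $\norm{T_{\N_k}(x)-z}_2\le\norm{x-z}_2$. Applying this to Kaplansky approximants $x$ of $z$ shows $Z(\N)\subseteq\overline{\bigcup_kZ(\N_k)}^{\norm{\cdot}_2}$. The same computation with your adjoined unitaries (which lie in $\N$ and hence fix $z$) gives $\norm{T_{\M}(y)-z}_2\le\norm{y-z}_2$ for $y$ in your dense sets. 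Since $T_{\M}$ is $\norm{\cdot}_2$-contractive, $\norm{T_{\M}(z)-z}_2\le2\norm{y-z}_2$, hence $z=T_{\M}(z)\in Z(\M)$. With this inserted, your tower construction works.

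\textbf{The shorter route.} The condition $Z(\N)\subseteq Z(\M)$ is far more than the lemma needs, and the paper never imposes it. Your own characterization of $T_{\N}(x)$ as the unique point of $Z(\N)\cap\overline{\operatorname{conv}}\{uxu^*:u\in\mathcal U(\N)\}$ finishes the proof at once. Adjoin to $A$ only the countably many unitaries $u_{n,j}\in\M$ from Dixmier approximations $D_n=\sum_j\lambda_{n,j}u_{n,j}Au_{n,j}^*$ of $T_{\M}(A)=0$ with $\norm{D_n}<2^{-n}$. Then $0$ lies in that hull computed inside $\N=W^*(A,u_{n,j})$, so $T_{\N}(A)=0$. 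The paper phrases this as $\norm{T_{\N}(A)}=\norm{T_{\N}(D_n)}\le\norm{D_n}$. This $\N$ need not be $T_{\M}$-invariant, and its center need not lie in $Z(\M)$.

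Your repaired route buys a stronger statement: a separable subalgebra whose center sits in $Z(\M)$, so that the two center-valued traces agree on all of $\N$. The cost is an iterated construction and the approximation argument above. For a single element, the one-step averaging argument suffices.
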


\begin{proof}
The Dixmier averaging theorem for finite von Neumann algebras
\cite[Theorem~8.3.6]{KR} gives
\[
0=T_\M(A)\in
\overline{\operatorname{conv}}^{\norm\cdot}
\{uAu^*:u\in\mathcal U(\M)\}.
\]
For every $n\ge1$, choose unitaries $u_{n,j}$ and weights
$\lambda_{n,j}\ge0$ for $1\le j\le m_n$, with
$\sum_j\lambda_{n,j}=1$, such that
\begin{equation}\label{eq:separableaverages}
D_n:=\sum_{j=1}^{m_n}\lambda_{n,j}u_{n,j}Au_{n,j}^*,
\qquad\norm{D_n}<2^{-n}.
\end{equation}
Let $\mathcal A$ be the unital C$^*$-algebra generated by $A$ and
all the $u_{n,j}$, and let $\N=W^*(\mathcal A)\subseteq\M$.
The algebra $\mathcal A$ is norm separable. The given trace
restricts to a faithful normal tracial state $\tau$ on $\N$.
In its faithful normal left representation on $L^2(\N,\tau)$,
Kaplansky density shows that $\mathcal A$ is dense in $\N$ for
the $L^2$-norm: bounded strong approximations converge on the
vector $1\in L^2(\N,\tau)$. Since
$\norm x_2\le\norm x$, a countable norm-dense subset of
$\mathcal A$ is $L^2$-dense in $L^2(\N,\tau)$.
Thus this Hilbert space is separable, and the faithfully and
normally represented algebra $\N$ has separable predual.

Every unitary in \eqref{eq:separableaverages} belongs to $\N$.
Unitary invariance and contractivity of $T_\N$ therefore give
\[
T_\N(D_n)=T_\N(A),\qquad
\norm{T_\N(A)}\le\norm{D_n}<2^{-n}.
\]
It follows that $T_\N(A)=0$.
\end{proof}

Including the averaging unitaries ensures that $T_\N(A)=0$,
without requiring any relation between $Z(\N)$ and $Z(\M)$.
Taking $\N=W^*(A)$ alone would not suffice: for
$A=\operatorname{diag}(1,-1)\in M_2(\C)$, the algebra $W^*(A)$
is abelian and its center-valued trace fixes $A$.

\subsection{Proof of Theorem~\ref{thm:main}}

\begin{proof}

Assume $T_\M(A)=0$ and $A\ne0$; for $A=0$, take $B=C=0$.
Set $K=\max\{K_{\mathrm I},K'\}$ as in \eqref{eq:finiteconstant}.

We first balance the factors in the two
factor theorems. If $a=\comm uv\ne0$, put
\[
\alpha=\sqrt{\norm v/\norm u},\qquad
b=\alpha u,\quad c=\alpha^{-1}v.
\]
Then $a=\comm bc$ and
$\norm b=\norm c=\sqrt{\norm u\norm v}$.
For $a=0$, take $b=c=0$. Every finite factor is either a matrix
algebra or of type $\mathrm{II}_1$, so the two factor theorems
and \eqref{eq:finiteconstant} give
\begin{equation}\label{eq:balancedfinitefactor}
\tau(a)=0\quad\Longrightarrow\quad
 a=\comm bc,\qquad
\norm b,\norm c\le\sqrt{K\norm a}
\end{equation}
in every finite factor. In the scalar factor the hypothesis
forces $a=0$.

Assume first that $\M_*$ is separable. The central decomposition theorem
for finite von Neumann algebras gives
\begin{equation}\label{eq:centraldecomposition}
(\M,\tau)=\int_X^\oplus(\M_x,\tau_x)\,d\mu(x),
\qquad
Z(\M)=L^\infty(X,\mu),
\end{equation}
where $(X,\mu)$ is a standard probability space and almost every
$(\M_x,\tau_x)$ is a finite factor with its normalized trace;
see \cite[Chapter~IV, Section~8]{Takesaki}.
Here $\tau$ is a faithful normal tracial state of $\M$,
and we use the direct-integral notation introduced in
Section~\ref{sec:prelim}.
Under this decomposition,
\begin{equation}\label{eq:fibertrace}
T_\M(A)_x=\tau_x(A_x)1_x\quad\text{almost everywhere}.
\end{equation}
Indeed, the field on the right is central and bounded by
$\norm A$. For $h\in L^\infty(X,\mu)$, disintegration of the trace
gives
\[
\tau(hA)=\int_X h(x)\tau_x(A_x)\,d\mu(x).
\]
Thus the map in \eqref{eq:fibertrace} is the $\tau$-preserving
conditional expectation onto the center. The center-valued trace
$T_\M$ also preserves $\tau$: Dixmier averaging approximates
$T_\M(X)$ in norm by convex combinations of unitary conjugates
of $X$, all having trace $\tau(X)$. Uniqueness of the
$\tau$-preserving conditional expectation proves
\eqref{eq:fibertrace}.

Choose a measurable representative of $A$ with
$\norm{A_x}\le\norm A$ almost everywhere.
By \eqref{eq:fibertrace}, $\tau_x(A_x)=0$ almost everywhere.
Equation~\eqref{eq:balancedfinitefactor} shows that the hypotheses
of Lemma~\ref{lem:measurablecomm} hold with
$L=\sqrt{K\norm A}$. It supplies measurable
sections $B_x,C_x$ with $\comm{B_x}{C_x}=A_x$ and
$\norm{B_x},\norm{C_x}\le L$ almost everywhere.
Their direct integrals $B,C\in\M$ satisfy $A=\comm BC$, because
multiplication of decomposable bounded operators is fiberwise,
and
\[
\norm B=\mathop{\mathrm{ess\,sup}}_x\norm{B_x}\le L,
\qquad
\norm C\le L.
\]
This gives the required commutator representation in this case.

Now suppose only that $\M$ admits a faithful normal tracial
state. Lemma~\ref{lem:separablereduction} gives a unital
subalgebra $\N\subseteq\M$ with separable predual such that
$A\in\N$ and $T_\N(A)=0$. The result just proved applies in
$\N$, yielding a commutator representation in $\M$ with the
same bounds on the two factors, since the inclusion is isometric.

For a general finite $\M$, put $Z=Z(\M)$ and choose a maximal
orthogonal family of nonzero
projections $(z_i)_{i\in I}$ in $Z$ such that each $z_iZ$ admits
a faithful normal state $\varphi_i$. Such a family exists by
Zorn's lemma. Its supremum is $1$: otherwise the nonzero
remaining central projection supports a nonzero normal positive
functional on $Z$. The support projection $z$ of that functional
is nonzero, orthogonal to all $z_i$, and the normalized restriction
of the functional to $zZ$ is a faithful normal state, contradicting
maximality.

For $\M_i=z_i\M$, define
\[
\tau_i(X)=\varphi_i(T_\M(X))\qquad(X\in\M_i).
\]
This is a normal tracial state with $\tau_i(z_i)=1$. It is
faithful: if $X\ge0$ and $\tau_i(X)=0$, then faithfulness of
$\varphi_i$ on $z_iZ$ gives $T_\M(X)=0$, and faithfulness of
$T_\M$ gives $X=0$. Moreover,
$T_{\M_i}(z_iA)=z_iT_\M(A)=0$.
The result for algebras admitting a faithful normal tracial state
therefore supplies $B_i,C_i\in\M_i$ such that
\[
z_iA=\comm{B_i}{C_i},\qquad
\norm{B_i},\norm{C_i}
\le\sqrt{K\norm{z_iA}}
\le\sqrt{K\norm A}.
\]
Represent $\M$ on a Hilbert space. The nets of finite sums of
the $B_i$ and of the $C_i$ converge strongly: by orthogonality,
for every vector $\xi$ and finite $F\subset I$,
\[
\Big\|\sum_{i\in F}B_i\xi\Big\|^2
=\sum_{i\in F}\norm{B_i z_i\xi}^2
\le K\norm A\sum_{i\in F}\norm{z_i\xi}^2,
\]
and the same bound applied to tails makes the net Cauchy.
The strong limits $B,C$ lie in $\M$ and have norms at most
$\sqrt{K\norm A}$. Since $z_i$ is central,
$z_i\comm BC=\comm{B_i}{C_i}=z_iA$ for every $i$.

Since $\sum_i z_i=1$ strongly, it follows that $\comm BC=A$.
The bounds on $B$ and $C$ give $\norm B\norm C\le K\norm A$,
completing the proof.
\end{proof}

\section{An explicit bound}\label{sec:conclusion}
Theorem~\ref{thm:main} admits the unoptimized numerical choice
\begin{equation}\label{eq:numericalK}
K=2^{\,2^{\,2^{46}}}.
\end{equation}

For $x\ge1$, \eqref{eq:f} gives
\[
\growthF(x)\le900x^9.
\]
At $t=2^{-19}$, put $L=2^{44}$ and
$y=\growthF^{\circ L}(\sqrt5\,2^{19})$. Iteration yields
\[
\log_2 y
\le9^L\log_2(\sqrt5\,2^{19})
  +\frac{9^L-1}{8}\log_2 900
<23\cdot9^L.
\]
Since $2\Lambda_0<2^{21}$, the formula in
Proposition~\ref{prop:matrixcomm} gives, for $x\ge1$,
\[
2(2\Lambda_0x)+x+2\le2^{23}x,\qquad
\sigma(x,2\Lambda_0x)\le2^{48}x^2,\qquad
\Pi^*(x)\le2^{220}x^9.
\]
The factors $5/t^2$ and $a_1^{-2}$ cancel in \eqref{eq:KH}.
Using $L+1\le2^{45}$, we therefore obtain
\[
K_H(2^{-19})
=y^2(L+1)\bigl(4\Pi^*(y)+2(L+1)y\bigr)
\le2^{270}y^{11}.
\]
Consequently, the choice $K'=3\cdot2^{18}+K_H(2^{-19})$ satisfies
\[
\log_2 K'<271+253\cdot9^L<2^{10}9^L,
\qquad
\log_2\log_2 K'<10+L\log_2 9<2^{46}.
\]
The matrix estimate in \cite{DIC} admits a choice
$\log_2\log_2 K_{\mathrm I}<2^{31}$: its analogous iteration has
$2^{29}$ steps and initial value $2^{27}$.
Together with \eqref{eq:finiteconstant}, these inequalities justify
the numerical choice \eqref{eq:numericalK}.

The dominant growth comes from the repeated similarities in the
block-elimination argument. Better control of these similarities
could substantially reduce the bound.

\section*{Acknowledgements}
I thank Wei Yuan for drawing my attention to the results of
Wen, Fang and Yao used in this paper. I am grateful to Sizhuo
Yan for helpful explanations of these results and for sharing
useful ideas. I would also like to thank Prof. Lihong Zhi for
valuable comments and suggestions on the manuscript.

\section*{Declaration of generative AI}
GPT-6 Astra and GPT-5.6 Sol were used during this work to assist
in exploring proof strategies and revising the exposition.
The author assumes full responsibility for all proofs and
mathematical details and for the mathematical correctness of
the final results.



\begin{thebibliography}{12}
\expandafter\ifx\csname natexlab\endcsname\relax\def\natexlab#1{#1}\fi
\providecommand{\DOIprefix}{doi:}
\providecommand{\ArXivprefix}{arXiv:}
\providecommand{\URLprefix}{URL: }
\providecommand{\Pubmedprefix}{pmid:}
\providecommand{\bibinfo}[2]{#2}
\ifx\xfnm\relax \def\xfnm[#1]{\unskip,\space#1}\fi
\bibitem[{Anantharaman and Popa()}]{AP}
\bibinfo{author}{Anantharaman, C.}, \bibinfo{author}{Popa, S.}
\newblock \bibinfo{title}{An introduction to {$\mathrm{II}_1$} factors}.
\newblock \bibinfo{note}{Draft lecture notes}.
\newblock \URLprefix \url{https://www.math.ucla.edu/~popa/Books/IIun.pdf}.

\bibitem[{Cao et~al.(2024)Cao, Fang and Yao}]{CFY}
\bibinfo{author}{Cao, X.}, \bibinfo{author}{Fang, J.},
  \bibinfo{author}{Yao, Z.}, \bibinfo{year}{2024}.
\newblock \bibinfo{title}{On finite sums of projections and {Dixmier}'s
  averaging theorem for type {$\mathrm{II}_1$} factors}.
\newblock \bibinfo{journal}{Journal of Functional Analysis}
  \bibinfo{volume}{287}(8), \bibinfo{pages}{110568}.
\newblock \DOIprefix\href{https://doi.org/10.1016/j.jfa.2024.110568}{\nolinkurl{10.1016/j.jfa.2024.110568}}.

\bibitem[{Fack and de~la Harpe(1980)}]{FdH}
\bibinfo{author}{Fack, T.}, \bibinfo{author}{de~la Harpe, P.},
  \bibinfo{year}{1980}.
\newblock \bibinfo{title}{Sommes de commutateurs dans les alg\`ebres de von
  {Neumann} finies continues}.
\newblock \bibinfo{journal}{Annales de l'Institut Fourier}
  \bibinfo{volume}{30}(3), \bibinfo{pages}{49--73}.
\newblock \DOIprefix\href{https://doi.org/10.5802/aif.792}{\nolinkurl{10.5802/aif.792}}.
\bibitem[{Johnson et~al.(2013)Johnson, Ozawa and Schechtman}]{JOS}
\bibinfo{author}{Johnson, W.B.}, \bibinfo{author}{Ozawa, N.},
  \bibinfo{author}{Schechtman, G.}, \bibinfo{year}{2013}.
\newblock \bibinfo{title}{A quantitative version of the commutator theorem for
  zero trace matrices}.
\newblock \bibinfo{journal}{Proceedings of the National Academy of Sciences}
  \bibinfo{volume}{110}(48), \bibinfo{pages}{19251--19255}.
\newblock \DOIprefix\href{https://doi.org/10.1073/pnas.1202411109}{\nolinkurl{10.1073/pnas.1202411109}}.
\bibitem[{Kadison and Ringrose(1986)}]{KR}
\bibinfo{author}{Kadison, R.V.}, \bibinfo{author}{Ringrose, J.R.},
  \bibinfo{year}{1986}.
\newblock \bibinfo{title}{Fundamentals of the Theory of Operator Algebras}.
  Volume \bibinfo{volume}{II: Advanced Theory}.
\newblock \bibinfo{publisher}{Academic Press}, \bibinfo{address}{Orlando, FL}.
\bibitem[{Kechris(1995)}]{Kechris}
\bibinfo{author}{Kechris, A.S.}, \bibinfo{year}{1995}.
\newblock \bibinfo{title}{Classical Descriptive Set Theory}. Volume
  \bibinfo{volume}{156} of \textit{\bibinfo{series}{Graduate Texts in
  Mathematics}}.
\newblock \bibinfo{publisher}{Springer-Verlag}, \bibinfo{address}{New York}.
\bibitem[{Marcoux(2006)}]{Marcoux}
\bibinfo{author}{Marcoux, L.W.}, \bibinfo{year}{2006}.
\newblock \bibinfo{title}{Sums of small number of commutators}.
\newblock \bibinfo{journal}{Journal of Operator Theory} \bibinfo{volume}{56}(1),
  \bibinfo{pages}{111--142}.
\newblock \URLprefix \url{https://jot.theta.ro/jot/archive/2006-056-001/2006-056-001-006.html}.
\bibitem[{Shen et~al.(2026)Shen, Wang and Zhi}]{DIC}
\bibinfo{author}{Shen, H.}, \bibinfo{author}{Wang, J.}, \bibinfo{author}{Zhi,
  L.}, \bibinfo{year}{2026}.
\newblock \bibinfo{title}{A dimension-independent commutator bound}.
\newblock \URLprefix \url{https://arxiv.org/abs/2609.09938}.
  \bibinfo{note}{arXiv preprint arXiv:2609.09938}.
\bibitem[{Shoda(1936)}]{Shoda}
\bibinfo{author}{Shoda, K.}, \bibinfo{year}{1936}.
\newblock \bibinfo{title}{Einige {S}\"atze \"uber {M}atrizen}.
\newblock \bibinfo{journal}{Japanese Journal of Mathematics}
  \bibinfo{volume}{13}, \bibinfo{pages}{361--365}.
\newblock \DOIprefix\href{https://doi.org/10.4099/jjm1924.13.0_361}{\nolinkurl{10.4099/jjm1924.13.0_361}}.
\bibitem[{Takesaki(1979)}]{Takesaki}
\bibinfo{author}{Takesaki, M.}, \bibinfo{year}{1979}.
\newblock \bibinfo{title}{Theory of Operator Algebras {I}}.
\newblock \bibinfo{publisher}{Springer-Verlag}, \bibinfo{address}{New York}.
\bibitem[{Vaes and Wouters(2025)}]{VW}
\bibinfo{author}{Vaes, S.}, \bibinfo{author}{Wouters, L.},
  \bibinfo{year}{2025}.
\newblock \bibinfo{title}{Borel fields and measured fields of {Polish} spaces,
  {Banach} spaces, von {Neumann} algebras, and {$C^*$}-algebras}.
\newblock \bibinfo{journal}{Journal of the London Mathematical Society}
  \bibinfo{volume}{111}(4), \bibinfo{pages}{e70159}.
\newblock \DOIprefix\href{https://doi.org/10.1112/jlms.70159}{\nolinkurl{10.1112/jlms.70159}}.
\bibitem[{Wen et~al.(2024)Wen, Fang and Yao}]{WFY}
\bibinfo{author}{Wen, S.}, \bibinfo{author}{Fang, J.}, \bibinfo{author}{Yao,
  Z.}, \bibinfo{year}{2024}.
\newblock \bibinfo{title}{A stronger version of {Dixmier}'s averaging theorem
  and some applications}.
\newblock \bibinfo{journal}{Journal of Functional Analysis}
  \bibinfo{volume}{287}(8), \bibinfo{pages}{110569}.
\newblock \DOIprefix\href{https://doi.org/10.1016/j.jfa.2024.110569}{\nolinkurl{10.1016/j.jfa.2024.110569}}.

\end{thebibliography}
\end{document}